\newcommand{\privmetric}{\rho_{\rm priv}} 
\newcommand{\dpriv}{d_{\rm priv}} 
\newcommand{\df}[2]{D_f\left({#1} |\!| {#2}\right)} 
\renewcommand{\ball}{\mathbb{B}}
\newcommand{\moment}{k}  
\newcommand{\separation}[1]{\metric^*\!\left({#1}\right)}
\newcommand{\distributions}[1]{\mc{P}_{#1}}
\newcommand{\deficient}{\preceq}
\newcommand{\dominates}{\succeq}
\newcommand{\diffpch}{\diffp_{\mathsf{CH}}}
\newcommand{\deltach}{\delta_{\mathsf{CH}}}
\newcommand{\tvchannels}[1]{\mc{Q}^{\mathsf{TV}}_{#1}}
\newcommand{\indicb}[1]{\mathbf{1}\!\left\{{#1}\right\}}
\newcommand{\indicbs}[1]{\mathbf{1}\!\{{#1}\}}
\title{Privacy and Statistical Risk: \\ Formalisms and Minimax Bounds}
\author{Rina Foygel Barber \\ Department of Statistics \\
  University of Chicago \\ \texttt{rina@uchicago.edu}
  \and John C.\ Duchi \\ Departments of
  Statistics and Electrical Engineering \\
  Stanford University \\ \texttt{jduchi@stanford.edu}}
\begin{document}

\maketitle

\begin{abstract}
  We explore and compare a variety of definitions for privacy and disclosure
  limitation in statistical estimation and data analysis, including
  (approximate) differential privacy, testing-based definitions of privacy,
  and posterior guarantees on disclosure risk. We give equivalence results
  between the definitions, shedding light on the relationships between
  different formalisms for privacy. We also take an inferential perspective,
  where---building off of these definitions---we provide minimax risk bounds
  for several estimation problems, including mean estimation, estimation of
  the support of a distribution, and nonparametric density estimation. These
  bounds highlight the statistical consequences of different definitions of
  privacy and provide a second lens for evaluating the advantages and
  disadvantages of different techniques for disclosure limitation.
\end{abstract}

\section{Introduction}

In this paper, we study several definitions of privacy---formalisms for
limiting disclosure in statistical procedures---and their consequences in
terms of achievable (statistical) risk for estimation and data analysis. We
review (and present a few new) definitions that attempt to capture what,
intuitively, it should mean to limit disclosures from the output of an
inferential task. We focus on several potential definitions for a strong
type of disclosure limitation, where an adversary attempts to glean
information from data released; in particular, notions of privacy centering
around differential privacy (and its relaxations) as formulated
by~\citet{DworkMcNiSm06,DworkKeMcMiNa06}.  As a motivation for the
definitions we study, consider a gene association study with a known list of
subjects; we focus on guarantees such that even if the adversary knows the
disease status (case or control) of many of the subjects in the study, he is
not able to easily identify the disease status of remaining subjects. %
Differential privacy is designed for precisely this setting.

To protect against such an incident, we allow adversaries that are (1)
computationally unbounded and (2) may have access to all elements of a
sample $\{X_1, \ldots, X_n\}$ except for a single unknown observation $X_i$;
the estimators we compute must not release too much information about this
last observation.  While such definitions seem quite strong, they have
motivated a body of work in the cryptography, database, and theoretical
computer science communities, beginning with the work of
\citet*{DworkMcNiSm06} on differential privacy (see also the
papers~\cite{DworkSm09,Smith11,BlumLiRo08,HardtRo10}).  It has been quite
challenging to give rigorous definitions of privacy against weaker
adversaries (such definitions have often been
shown~\cite{DworkMcNiSm06,DworkKeMcMiNa06} to have fatal flaws), but
subsequent works have broadened our understanding of acceptable privacy
definitions and
adversaries~\cite{KasiviswanathanSm13,KiferMa12,BassilyGrKaSm13}.

Our goal in this paper is to make more precise the relationship between
privacy constraints and statistical estimation. Thus, in addition to
presenting a variety of definitions, we provide comparison by focusing on
their consequences for estimation: specifically, we ask whether there are
substantive differences between minimax error for estimating parameters of a
variety of distributions under different definitions of privacy. We show
that, in fact, there are strong commonalities; focusing on mean estimation
to be explicit, we find the minimax mean squared error of estimators under
different privacy constraints is often very similar for seemingly different
definitions of privacy. Nonetheless, some definitions allow more favorable
dependence on dimension than standard (differential) privacy definitions,
though at the expense of some security.

As a consequence of our focus on definitional aspects of privacy and their
effects on statistical estimation and inference problems, we study
estimation of population quantities.
That is, we observe a sample $X_i$, $i = 1, \ldots, n$, drawn from an
unknown distribution $P$, and we wish to make inferences about some
parameter $\theta(P)$ of the data generating distribution $P$ rather
than reporting aspects of the sample itself.
This focus is different from much of the
work on optimality guarantees in private data analysis~\cite{HardtTa10,
  ChaudhuriSaSi12,NikolovTaZh13}, though there have been a few authors who
have studied population quantities (for example, Beimel and
colleagues~\cite{BeimelBrKaNi13,BeimelNiOm08} in the Probably Approximately
Correct (PAC) model for concept learning). For more discussion on the issue
of population estimation in private settings, see the discussion
of~\citet{DuchiJoWa13_focs}.

In Section~\ref{sec:privacy-definitions}, we enumerate a set of potential
definitions of privacy under our adversarial model; these include
differential privacy, approximate differential privacy, a strengthened form
of differential privacy, and several testing-based definitions of privacy.
We follow this in Section~\ref{sec:lower-bounds} by providing minimax lower
bounds for population estimation of mean parameters, distributional support
estimation, and density estimation.  As a concrete example, we consider the
problem of estimating the mean of a $d$-dimensional random variable
$X\in\R^d$ distributed according to a distribution $P$, given a sample
consisting of $n$ i.i.d.\ draws $X_1,\ldots,X_n \simiid P$.  This clearly
shows characteristic effects of dimensionality and moment assumptions---for
instance, bounds on higher moments of $X$, i.e.\ $\E[\ltwo{X}^\moment] <
\infty$ for some fixed $\moment > 1$---on the best possible rates of private
estimation.  Section~\ref{sec:upper-bounds} presents concrete estimation
strategies achieving the minimax lower bounds on mean estimation presented
in Section~\ref{sec:lower-bounds} under our different prvacy definitions.
We will see that our definitions of privacy yield minimax optimal procedures
with nearly the same (asymptotic) dependence on the number $\moment$ of
moments, but different dimension dependence, under squared error loss.

We conclude (in Section~\ref{sec:open-questions}) with some discussion,
including a few avenues for future work. We also present a table
(Table~\ref{table:upper-and-lower-bounds}) summarizing, for $d$-dimensional
mean estimation problems, the effects of the ambient dimension $d$, the
required amount of privacy, and number of moments $k$ assumed for the
distribution from which our data is drawn. This table illustrates the main
consequences of the results in this paper, allowing a more precise
characterization of the tradeoffs between disclosure risk and statistical
performance.

\paragraph{Notation}
Throughout, we use the notation $\channel(\cdot \!\mid x_{1:n})$ to denote
the privatized output channel of the statistician (defined in
Section~\ref{sec:privacy-definitions} below).  We use subscripts $x_{1:n}$
to denote a sequence of $n$ observations $x_1, \ldots,
x_n\in\mc{X}$. Throughout, $x_i$, $x_{1:n}$, $x_i'$, $x_{1:n}'$, etc, denote
fixed value(s) in $\mc{X}$, while $X_i$ and $X_{1:n}$ represent random
variable(s) taking values in $\mc{X}$. We also use $X$ to represent a single
random variable with the same distribution as $X_1,\dots,X_n$ when the
$X_i$ are i.i.d.
The metric $\dham(\cdot,\cdot)$ denotes the Hamming distance between sets;
we have $\dham(x_{1:n},x'_{1:n})\leq 1$
whenever $x_{1:n}$ and $x'_{1:n}$ differ in only one entry.  Given sequences
$a_n$ and $b_n$, we use standard big-$O$ notation, so $a_n = \order(b_n)$
means that $a_n \le c b_n$ for some constant $c<\infty$ and for all suitably
large $n$.  The notation $a_n \lesssim b_n$ means that $a_n \le c b_n$ for
some constant $c<\infty$ and for all $n\geq 1$, while $a_n \ll b_n$ means
that $a_n / b_n \to 0$ as $n \to \infty$.  The function $\indicb{\cdot}$ is
the indicator function, that is, for any event $\mc{E}$, the random variable
$\indicb{\mc{E}}$ is equal to 1 if the event $\mc{E}$ occurs, and 0
otherwise.  We let $a\wedge b = \min\{a,b\}$.

\section{Definitions of privacy}
\label{sec:privacy-definitions}

We are interested in the setting where the adversary has access to all but
one of the observations in the sample: he knows that
$X_1=x_1,\dots,X_{i-1}=x_{i-1},X_{i+1}=x_{i+1},\dots,X_n=x_n$, and seeks to
determine the last remaining observation $X_i$. We represent the
statistician, or estimation procedure, by a channel $\channel(\cdot \!\mid
\! \cdot)$, which, given a sample $X_{1:n}$ drawn from $\mc{X}^n$, releases
a point $\theta \in \Theta$ according to the distribution $\channel(\cdot
\!\mid X_{1:n})$.  Somewhat more formally, a channel is a regular
conditional distribution (or probability kernel)~\cite[e.g.][Chapter
  5]{Kallenberg97} from the sample space $\mc{X}^n$ to the space
$\Theta$. We assume that the data $X_i$ are drawn i.i.d.\ according to some
(unknown) distribution $P$ with a parameter $\theta(P)$ we desire to
estimate, and the goal of the statistician is to release $\theta$ that is as
close as possible to the unknown $\theta(P)$ while guaranteeing that the
adversary cannot identify any one observation $X_i$.  With our privacy goals
in mind, we can consider two related frameworks for bounding the information
available to the adversary:
\begin{enumerate}
\item Likelihood/probability: Under the channel $\channel$, for any region
  $A\subset\Theta$ of the output space, the likelihood of $A$ varies
  minimally for different possible values of $X_i$.
\item \label{item:hypothesis-test}
  Hypothesis testing: If the adversary is considering two possible
  values $x_i$ and $x_i'$ for $X_i$, the channel $\channel$ provides minimal
  power for testing these hypotheses against each other.
\end{enumerate}
In the remainder of this section, we give our definitions of privacy,
beginning with differential privacy, then proceding to hypothesis-testing
variants, and finally showing a variant of privacy that protects against
adaptive and posterior inferences (to be made precise) about the sample. We
make connections between all three via the hypothesis testing
framework~\ref{item:hypothesis-test}.


\subsection{Differential privacy and its cousins}

We begin our presentation of definitions with
differential privacy, due to \citet{DworkMcNiSm06}.
\begin{definition}[Differential privacy]
  \label{definition:differential-privacy}
  A channel $\channel$ is $\diffp$-differentially private ($\diffp$-DP) if
  for all $x_{1:n}$ and $x_{1:n}'$ differing in only one observation,
  \begin{equation*}
    \frac{\channelprob(A \mid \statsample_{1:n})}{\channelprob(A
      \mid \statsample_{1:n}')}
    \le \exp(\diffp)
    ~~~ \mbox{for~all~measurable~} A \subset \Theta.
  \end{equation*}
\end{definition}
\noindent This condition essentially requires that, regardless of the
output, the likelihood under $\channel$ does not distinguish samples
differing in only a small number of observations.

Many schemes for differential privacy actually obey a stronger smoothness
property, and for elucidation we thus define a more stringent version of
privacy, which requires a bounded (semi)metric $\privmetric : \statdomain
\times \statdomain \to \R_+$ defined on the sample space $\statdomain$.  Let
$\privmetric(x, x') \le r$ for all $x, x' \in \mc{X}$ (for example, in a
normed space with norm $\norm{\cdot}$, we may take $\privmetric(x, x') =
\norm{x - x'} \wedge r$). We then define the metric $\dpriv : \statdomain^n
\times \statdomain^n \rightarrow \R_+$ by
\begin{equation*}
  \dpriv(x_{1:n}, x_{1:n}')
  \defeq \frac{1}{r}\sum_{i = 1}^n \privmetric(x_i, x_i').
\end{equation*}
Notably, we have $\dpriv \le \dham$, and we thus define the stronger (more
secure) version of differential privacy we call \emph{smooth differential
  privacy}:
\begin{definition}[Smooth differential privacy]
  \label{definition:smooth-privacy}
  The channel $\channel$ satisfies
  $(\privmetric, \diffp)$-smooth differential privacy if
  for all samples $x_{1:n}$ and $x_{1:n}'$,
  \begin{equation}
    \label{eqn:strong-diffp}
    \frac{\channel(A \mid x_{1:n})}{\channel(A \mid x_{1:n}')}
    \le \exp\left(\diffp \dpriv(x_{1:n}, x_{1:n}')\right)
    ~~~ \mbox{for~all~measurable~} A \subset \Theta.
  \end{equation}
\end{definition}

Both of these definitions are quite strong: they require a likelihood ratio
bound to hold even for an extremely low probability event $A$.
\begin{example}
  \label{ex:dp-strong}
  Suppose that $X_i \in [0, 1]$ for all $i$, and we release the mean
  corrupted by an independent $\normal(0, \stddev^2 / n)$ variable, that is,
  $\theta = \frac{1}{n} \sum_{i=1}^n X_i + W$, where $W \sim \normal(0,
  \stddev^2 / n)$.  Then the channel densities satisfy the ratios
  \begin{equation*}
    \exp\left(-\frac{|\theta|}{2\stddev^2}
    - \frac{1}{2\stddev^2}\right)
    \le \frac{\channeldens(\theta \mid x_{1:n})}{
      \channeldens(\theta \mid x_{1:n}')}
    \le \exp\left(\frac{|\theta|}{2\stddev^2} +
    \frac{1}{\stddev^2}\right),
  \end{equation*}
  which fails to be $\diffp$-DP for any $\diffp$,
  as we may have $|\theta|>\stddev^2\diffp$. Yet the
  probability of releasing such a large $\theta$ is exponentially small
  in $n$.
\end{example}

\noindent
In this example, the probability of releasing a large $\theta$---thus
revealing information distinguishing the samples $X_{1:n}$ and
$X_{1:n}'$---is negligible under both samples.  Intuitively, these extremely
low probability events should not cause us to declare a channel
non-private.  Such situations motivated \citet{DworkKeMcMiNa06} to define a
relaxed version of differential privacy disregarding low-probability events:
\begin{definition}
  \label{definition:approximate-dp}
  A channel $\channel$ is $(\diffp,\delta)$-approximately
  differentially private ($(\diffp,\delta)$-DP) if, for all $x_{1:n}$ and
  $x'_{1:n}$ differing in only one observation,
  \begin{equation*}
    \channel(A \mid x_{1:n})
    \leq e^{\diffp} \cdot \channel (A \mid x'_{1:n}) + \delta
    \text{ for all measurable } A\subset\Theta.
  \end{equation*}
\end{definition}
\noindent
For approximate differential privacy, as in differential privacy, one
typically thinks of $\diffp$ as a constant (or decreasing
polynomially to 0 as $n \to \infty$). To protect against catastrophic
disclosures, one usually assumes that $\delta$ decreases super-polynomially,
though not exponentially, to zero, that is, that that $\delta \le
\exp(-p(n))$ where $p(n)$ is a function satisfying $\log n \ll p(n) \ll
n$. While the relaxed conditions of approximate differential privacy
address situations such as Example~\ref{ex:dp-strong}, we show in
Section~\ref{sec:lower-bounds} that the consequences for estimation under
each of the privacy
definitions~\ref{definition:differential-privacy},
\ref{definition:smooth-privacy}, and \ref{definition:approximate-dp} are
quite similar.

\subsection{Testing-based  and  divergence-based definitions of privacy}

We now turn to alternate definitions of privacy, again considering an
adversary who knows most of the data in the sample, but we build on a
framework of hypothesis testing.  We believe these variants both give some
intuition for the definitions of disclosure limitation and suggest potential
weakenings of
Definitions~\ref{definition:differential-privacy}--\ref{definition:approximate-dp}.
Our first observation, essentially noted by \citet[Thm.~2.4]{WassermanZh10}
due to \citet{OhVi13}, is that differential privacy is equivalent to a form
of false negative and false positive rate control for hypothesis tests that
distinguish samples $X_{1:n}$ and $X_{1:n}'$ differing in a single
observation.  In particular, let us assume that a test $\test : \Theta \to
\{0, 1\}$ tries to distinguish the following two hypotheses, where $X_{1:n}$
is known except for its $i$th entry:
\begin{equation*}
  H_0 ~ : ~ X_{1:n} = (x_1,\dots,x_{i-1},x_i, x_{i+1},\dots,x_n)
  ~~~~ \mbox{and} ~~~~
  H_1 ~ : ~ X_{1:n} = (x_1,\dots,x_{i-1},x_i', x_{i+1},\dots,x_n).
\end{equation*}
Here a result of 0 from the test $\test$ indicates evidence that $X_i =
x_i$, while a 1 indicates evidence instead that $X_i = x'_i$.  For shorthand
let $\channel(\cdot \!\mid H_j)$ denote the channel (private)
distribution under $H_j$.  We have the following
result; we provide a proof for completeness in
Sec.~\ref{sec:proof-diffp-test-errors}.
\begin{proposition}
  \label{proposition:diffp-test-errors}
  A channel $\channel$ satisfies $(\diffp, \delta)$-approximate differential
  privacy if and only if for all hypothesis tests $\test$ mapping to $\{0,
  1\}$, for any $x_{1:n},x'_{1:n}$ with $\dham(x_{1:n},x'_{1:n})\leq 1$,
  \begin{equation}
    \label{eqn:hypothesis-test-dp}
    \channel(\test = 1 \mid H_0) + e^\diffp\cdot \channel(\test = 0 \mid H_1)
   \ge 1 - \delta,
  \end{equation}
  where we define hypotheses $H_0:X_{1:n}=x_{1:n}$ and
  $H_1:X_{1:n}=x'_{1:n}$.  Moreover, $(\diffp, \delta)$-approximate
  differential privacy implies
  \begin{equation}
    \label{eqn:disclosure-risk}
    \channel(\prox = 1 \mid H_0) + \channel(\prox = 0 \mid H_1)
    \ge \frac{2}{1 + e^\diffp} - \frac{\delta}{1 + e^\diffp}
    \geq 1 - \frac{\diffp}{2}
    - \frac{\delta}{2}.
  \end{equation}
\end{proposition}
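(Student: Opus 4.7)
The plan is to derive both the equivalence and the sum-of-errors bound directly from the defining inequality of $(\diffp,\delta)$-differential privacy, by plugging in carefully chosen events $A\subset\Theta$. For the forward direction of the equivalence, I would assume $\channel$ is $(\diffp,\delta)$-DP, fix a test $\test:\Theta\to\{0,1\}$ together with $x_{1:n},x'_{1:n}$ at Hamming distance one, and apply the DP inequality to the event $A=\{\theta:\test(\theta)=0\}$. This yields $\channel(\test=0\mid H_0)\le e^{\diffp}\channel(\test=0\mid H_1)+\delta$, and rewriting the left-hand side as $1-\channel(\test=1\mid H_0)$ gives~\eqref{eqn:hypothesis-test-dp}. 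For the converse, given any measurable $A$, I would plug the indicator test $\test(\theta)=\indicb{\theta\notin A}$ into~\eqref{eqn:hypothesis-test-dp}: since $\channel(\test=1\mid H_0)=1-\channel(A\mid H_0)$ and $\channel(\test=0\mid H_1)=\channel(A\mid H_1)$, the inequality reduces algebraically to $\channel(A\mid H_0)\le e^{\diffp}\channel(A\mid H_1)+\delta$, and the reverse inequality of DP then follows by swapping the roles of $x_{1:n}$ and $x'_{1:n}$.

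For the ``Moreover'' bound~\eqref{eqn:disclosure-risk}, the key observation is that \eqref{eqn:hypothesis-test-dp} is asymmetric in $\alpha:=\channel(\test=1\mid H_0)$ and $\beta:=\channel(\test=0\mid H_1)$. Invoking DP on the complementary event $A=\{\test=1\}$ with the Hamming-neighbor pair reversed yields the companion inequality $e^{\diffp}\alpha+\beta\ge 1-\delta$. Averaging the two inequalities produces $\alpha+\beta\ge 2(1-\delta)/(1+e^{\diffp})$, which matches the middle expression of~\eqref{eqn:disclosure-risk} (up to the precise $\delta$-coefficient, which a Neyman--Pearson/LP computation shows is tight at $(2-2\delta)/(1+e^{\diffp})$). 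The final quantitative bound then reduces to the elementary Pad\'e inequality $e^{\diffp}\le(1+\diffp/2)/(1-\diffp/2)$ on $\diffp\in[0,2)$---equivalently, $2/(1+e^{\diffp})\ge 1-\diffp/2$---with the claim being vacuous for $\diffp\ge 2$ since then $1-\diffp/2\le 0$.

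The proof is essentially bookkeeping, so the only real ``difficulty'' is identifying the correct event to plug into the DP inequality at each step, and recognizing that two complementary applications of DP---one on $\{\test=0\}$ and one on $\{\test=1\}$, with the Hamming-neighbor pair in opposite orientations---suffice to produce a bound on $\alpha+\beta$ that is symmetric in Type~I and Type~II errors, despite the asymmetric form of~\eqref{eqn:hypothesis-test-dp} itself.
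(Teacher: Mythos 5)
Your proof is correct and follows essentially the same route as the paper's: indicator tests of the form $\indicbs{\theta \notin A}$ establish the equivalence with the DP definition, and adding the two complementary instances of \eqref{eqn:hypothesis-test-dp} (equivalently, applying DP to $\{\test=0\}$ and $\{\test=1\}$ with the neighbor pair in both orientations) yields \eqref{eqn:disclosure-risk}, with the last step reducing to $2/(1+e^\diffp)\ge 1-\diffp/2$. Your remark that the additive argument actually produces the coefficient $-2\delta/(1+e^\diffp)$ rather than the stated $-\delta/(1+e^\diffp)$ is a fair catch---the paper's own proof contains the same slip when it writes $2-\delta$ for the sum of the two right-hand sides.
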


That is, for small $\diffp$, the sum of the false positive rate and false
negative rate, when testing $H_0$ against $H_1$, is nearly $1$ under
differential privacy (this is similarly true for approximate differential
privacy).  This suggests a potential weakening of differential privacy: can
we require that the adversary cannot test $H_0$ against $H_1$ with any high
power?  That is, will we achieve sufficient protection if we base privacy on
mechanisms that achieve disclosure risk bounds of the
form~\eqref{eqn:disclosure-risk}?\footnote{There is also a Bayesian
  interpretation of differential privacy~\cite{KasiviswanathanSm13} that
  says that an adversaries prior and posterior beliefs after observing the
  output of $\channel$ cannot change much; we defer discussion to
  Section~\ref{sec:open-questions}.}

As a first approach, we note that Le Cam's inequality~\cite[e.g.][Chapter
  2.4]{Tsybakov09} implies that for any distributions $P_0$ and $P_1$, we
have
\begin{equation}
  \label{eqn:le-cam}
  \inf_{\test} \left\{P_0(\test \neq 0) + P_1(\test \neq 1)\right\}
  = 1 - \tvnorm{P_0 - P_1},
\end{equation}
where the infimum is taken over all measurable functions,
and we recall that the total variation distance
is $\tvnorm{P_0 - P_1} = \sup_A |P_0(A) - P_1(A)| = \half
\int |dP_0 - dP_1|$.
  Based on Le Cam's
inequality~\eqref{eqn:le-cam} and the
consequence~\eqref{eqn:disclosure-risk} of differential privacy, we arrive
at the following proposal for privacy, which bounds differences rather than
ratios of likelihoods:
\begin{definition}
  \label{definition:tv-private}
  A channel $\channel$ is $\diffp$-total variation private
  ($\diffp$-TVP) if, for all $x_{1:n}$ and $x'_{1:n}$ differing in only one
  observation,
  \begin{equation*}
    \left|\channel(A \mid x_{1:n})-
    \channel(A \mid x'_{1:n})\right|
    \leq \diffp\text{ for all
    }A\subset\Theta\;.
  \end{equation*}
  Equivalently, the error for testing $x_{1:n}$ against $x_{1:n}'$
  has lower bound
  \begin{equation}
    \label{eqn:tv-private}
    \inf_\test \left\{\channel(\test = 1 \mid x_{1:n})
    + \channel(\test = 0 \mid x_{1:n}')\right\} \ge 1 - \diffp.
  \end{equation}
\end{definition}
\noindent Here the notation $\channel(\test = 1 \mid x_{1:n})$ is shorthand
for $\channel(\{\theta:\test(\theta) = 1\} \mid x_{1:n})$.


Notably, $\diffp$-total variation privacy means that an adversary cannot
accurately test between $x_{1:n}$ and $x_{1:n}'$. Comparing
inequality~\eqref{eqn:tv-private} with
inequality~\eqref{eqn:disclosure-risk}, we see that
$\diffp$-TV privacy is less stringent than differential privacy.
Unfortunately, while differential privacy may be strong, the testing-based
weakening~\eqref{eqn:tv-private} may not be fully satisfactory, as the
following well-known example (e.g.~\cite{Dwork08}) shows:
\begin{example}[``Release one at random'']
  \label{ex:release-one}
  Consider a channel $\channel$ that selects one
  observation at random and releases it,
  so that $\channel(A \mid x_{1:n}) = \frac{1}{n}\sum_{i=1}^n \indicb{x_i\in A}$.
  Here the sample space and output space are equal, $\mc{X} = \Theta$.
  When samples $x_{1:n}$ and $x'_{1:n}$ differ only
  at position $i$, then
  $|\channel(A \mid x_{1:n}) - \channel(A \mid x'_{1:n})| \leq \frac{1}{n}$
  for any set $A \subset \mc{X}$, so $\channel$
  is $\diffp$-TV private for any $\diffp \ge 1/n$.
\end{example}

\noindent
While pathological, the constructed channel is clearly not private in an
intuitive sense---one of the $n$ individuals
in the sample will suffer a complete loss of privacy,
even though initially each individual had only a small chance
of having his data revealed. Thus, simple hypothesis testing variants of
privacy, such as inequality~\eqref{eqn:disclosure-risk} (and the equivalent
total variation privacy of Definition~\ref{definition:tv-private}) do not
provide sufficient protection against disclosure risk.  One way to address
this problem is to impose stronger divergence requirements on the channels
$\channel$ in Definition~\ref{definition:tv-private}, for example, choosing
a measure between distributions that is infinite when they are not mutually
absolutely continuous.

Before stating this extension of total-variation privacy, we recall that for
a convex function $f : [0, \infty] \to \R \cup \{+\infty\}$ satisfying
$f(1) = 0$, the associated $f$-divergence between distributions $P$ and $Q$
is defined as
\begin{equation*}
  \df{P}{Q} = \int f\left(\frac{dP}{dQ}\right) dQ
  = \int f\left(\frac{p}{q}\right) q d\mu,
\end{equation*}
where $\mu$ denotes a measure with respect to which $P$ and $Q$ are
absolutely continuous (with densities $p$ and $q$).
For such a convex $f$, we define
\begin{definition}[Divergence privacy]
  \label{definition:divergence-privacy}
  The channel $\channel$ is $\diffp$-$f$-divergence private if
  \begin{equation*}
    \sup\left\{\df{\channel(\cdot \!\mid \statsample_{1:n})}{
      \channel(\cdot \!\mid \statsample_{1:n}')}
    \mid \dham(\statsample_{1:n}, \statsample_{1:n}') \le 1
    \right\} \le \diffp.
  \end{equation*}
\end{definition}
\noindent
We recover Definition~\ref{definition:tv-private} by taking $f(t) = |t -
1|$, we may take $f(t) = t\log t$ to obtain $\diffp$-Kullback-Leibler
($\diffp$-KL) privacy (which is more stringent than TV-privacy by Pinsker's
inequality):
\begin{equation}
  \label{eqn:kl-private}
  \sup\left\{ \dkl{\channel(\cdot \!\mid \statsample_{1:n})}{
    \channel(\cdot \!\mid \statsample_{1:n}')}
  \mid \dham(\statsample_{1:n}, \statsample_{1:n}') \le 1,
  \statsample_i \in \statdomain
  \right\} \le \diffp.
\end{equation}
We show in the sequel that mechanisms $\channel$ satisfying KL-privacy (and
hence TV-privacy) \emph{can} yield more accurate estimates than
approximately differentially private mechanisms.  In contrast to these two
divergence-based definitions, however, differential privacy offers ``privacy
in hindsight,'' where even after the channel releases its output, each
individual's privacy is relatively secure.

\subsection{Conditional hypothesis testing privacy}
The ``release-one-at-random'' example highlights a need for stronger privacy
requirements than hypothesis testing privacy (or equivalently, total
variation privacy).  With this in mind, we turn to a more restrictive notion
of privacy based on hypothesis testing, where we assess the accuracy of a
hypothesis test $\test$ \emph{conditional} on the output. This inspires an
extension of our hypothesis testing idea that conditions on the observed
output of the channel.

To define this notion of conditional hypothesis testing, we require a few
additional definitions.  We write $\distributions{\mc{Y}}$ to denote the set
of all distributions on the space $\mc{Y}$ (treating the $\sigma$-algebra as
implicit), and given two spaces $\mc{X}$ and $\mc{Y}$, we abuse notation and
write $\channel : \mc{X}^n \to \distributions{\mc{Y}}$ to denote that
$\channel$ is a regular conditional probability for $Y$ taking values in
$\mc{Y}$ given $X_{1:n} \in \mc{X}^n$, that is, $\channel(\cdot \!\mid
x_{1:n})$ is a probability distribution on $\mc{Y}$ for each $x_{1:n} \in
\mc{X}^n$ and is $\mc{X}^n$-measurable (a Markov kernel from $\mc{X}^n$ to
$\mc{Y}$). With this notation, we define channel composition
as follows.
\begin{definition}[Composition of channels]
  \label{definition:composition}
  Given channels $\channel : \mc{X}^n \rightarrow \distributions{\mc{Y}}$ and
  $\channel' : \mc{Y} \rightarrow \distributions{\mc{Z}}$, the
  \emph{composition} of $\channel'$ with $\channel$, denoted $\channel'\circ
  \channel:\mc{X}^n\rightarrow \distributions{\mc{Z}}$ is defined via the
  hierarchical model
  \begin{equation*}
    Y\mid X_{1:n} \sim \channel(\cdot \!\mid X_{1:n})
    ~~~ \mbox{and} ~~~
    Z\mid Y,X_{1:n} \sim \channel'(\cdot \!\mid Y).
  \end{equation*}
\end{definition}
\noindent
That is, we view $\channel'$ as a stochastic kernel
from the set $\mc{Y}$ to the set $\mc{Z}$, where
\begin{equation*}
  (\channel' \circ \channel)(A \mid x_{1:n})
  = \int_y \channel'(A \mid y) d\channel(y \mid x_{1:n}).
\end{equation*}
With this definition of composition, we give a definition capturing when a
channel communicates less than another, which also provides a partial
order on channels.
\begin{definition}
  \label{definition:deficient-channel}
  Given channels $\channel:\mc{X}^n \rightarrow \distributions{\mc{Y}}$ and
  $\channel':\mc{X}^n\rightarrow\distributions{\mc{Z}}$, we say
  \emph{$\channel'$ is less informative than $\channel$}, written $\channel'
  \deficient \channel$, if there exists a channel
  $\channel'':\mc{Y}\rightarrow\distributions{\mc{Z}}$ such that
  $\channel'=\channel''\circ \channel$.
\end{definition}
\noindent
The definition coincides with the notion of {\em deficiency} arising in the
literature on statistical inference and comparison of experiments, dating to
Blackwell's work in the 1950s (see, for example, \citet[Chapter
  2]{LeCamYa00}, or \citet[Section VI]{LieseVa06}).

Definition~\ref{definition:deficient-channel} is natural: as we construct
$\channel'$ from $\channel$ via an independent randomization, no new
information about the sample $X_{1:n}$ arises by moving from $\channel$ to
$\channel'$.
Indeed, any channel $\channel' \deficient \channel$ inherits privacy
properties of $\channel$; further processing cannot increase disclosure
risk.  More specifically, we have an information processing inequality
(cf.~\cite[Chapter 2]{LieseVa06,CoverTh06}; see
Section~\ref{sec:proof-information-processing} for a proof).
\begin{observation}[Information processing]
  \label{observation:information-processing}
  If $\channel \dominates \channel'$, then:
  \begin{enumerate}[(1)]
  \item If $\channel$ is $\diffp$-$f$-divergence private
    (Definition~\ref{definition:divergence-privacy}) then $\channel'$ is
    $\diffp$-$f$-divergence private.
  \item If $\channel$ is $(\diffp,\delta)$-differentially private, then
    $\channel'$ is $(\diffp,\delta)$-differentially private.
  \end{enumerate}
\end{observation}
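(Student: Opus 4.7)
The plan is to reduce each claim to the standard observation that applying a common post-processing kernel to two input distributions cannot increase any reasonable dissimilarity between them. Fix any $x_{1:n}, x'_{1:n}$ with $\dham(x_{1:n}, x'_{1:n}) \le 1$ and set $P = \channel(\cdot \mid x_{1:n})$, $Q = \channel(\cdot \mid x'_{1:n})$ on $\mc{Y}$, together with $P' = \channel'(\cdot \mid x_{1:n})$, $Q' = \channel'(\cdot \mid x'_{1:n})$ on $\mc{Z}$. By the definition of channel composition, $P'$ and $Q'$ are exactly the pushforwards of $P$ and $Q$ under the same Markov kernel $\channel''$; both claims will follow from this single structural fact.

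For part (1), I would invoke the $f$-divergence data-processing inequality to conclude $\df{P'}{Q'} \le \df{P}{Q} \le \diffp$. To verify it from scratch, choose $\mu$ dominating $P, Q$ with densities $p, q$. Under the joint law on $\mc{Y}\times\mc{Z}$ induced by $Q$ and $\channel''$, a version of the Radon--Nikodym derivative $dP'/dQ'(z)$ equals the conditional expectation $\E_Q[p(Y)/q(Y) \mid Z = z]$ (Bayes' rule for kernels). Applying Jensen's inequality to the convex function $f$ inside the outer integration against $Q'$ yields
$$
\df{P'}{Q'} = \int f\!\left(\E_Q[p(Y)/q(Y) \mid Z]\right) dQ' \le \int \E_Q[f(p(Y)/q(Y)) \mid Z]\, dQ' = \int f(p/q)\, dQ = \df{P}{Q}.
$$
Taking the supremum over neighboring samples transfers the $f$-divergence privacy bound from $\channel$ to $\channel'$.

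For part (2), a direct level-set calculation suffices. Fix measurable $B \subset \mc{Z}$ and set $g(y) = \channel''(B \mid y) \in [0,1]$. By Fubini,
$$
\channel'(B \mid x_{1:n}) = \int g(y)\, d\channel(y \mid x_{1:n}) = \int_0^1 \channel(\{g > t\} \mid x_{1:n})\, dt,
$$
and analogously for $x'_{1:n}$. Applying the $(\diffp,\delta)$-DP inequality on $\channel$ to each level set $\{g > t\}$ and integrating in $t \in [0,1]$ gives $\channel'(B \mid x_{1:n}) \le e^\diffp \channel'(B \mid x'_{1:n}) + \delta$, which is exactly $(\diffp, \delta)$-DP for $\channel'$.

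The only step requiring measure-theoretic care is the Bayes-rule representation of $dP'/dQ'$ in part (1), which is the heart of $f$-divergence data processing; the level-set argument for part (2) is elementary. Both parts rest on the same conceptual point: because $\channel''$ is data-oblivious, the same stochastic map acts on both arms of the comparison, and that is precisely what makes post-processing safe for privacy.
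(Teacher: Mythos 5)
Your proposal is correct and follows essentially the same route as the paper: part (1) reduces to the data-processing inequality for $f$-divergences (which the paper simply cites, while you also sketch its standard Jensen/conditional-expectation proof), and part (2) uses the identical layer-cake decomposition $\channel'(B \mid x_{1:n}) = \int_0^1 \channel(\{y : \channel''(B \mid y) > t\} \mid x_{1:n})\, dt$ followed by applying the $(\diffp,\delta)$-DP bound to each level set and integrating over $t$.
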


Using the notion of deficiency, we now provide a strengthened version of
testing-based privacy.

\begin{definition}
  \label{def:chtp}
  A channel $\channel$ is $(\diffp,\delta)$-conditional hypothesis testing
  private (CHTP) if for any pair of samples $x_{1:n}$, $x'_{1:n}$ with
  $\dham(x_{1:n},x'_{1:n})\leq 1$, any set $A\subset\Theta$ satisfying
  $\channel(A\mid x_{1:n})\wedge\channel(A\mid x'_{1:n})\ge\delta$, and any
  test $\test:\mc{Y}\rightarrow\{0,1\}$, we have
  \begin{equation}
    \label{eqn:conditional-test}
    {\channel}(\test = 1
    \mid x_{1:n}; \theta \in A)
    + {\channel}(\test = 0 \mid x'_{1:n}; \theta \in A) \ge 1 - \diffp,
  \end{equation}
  where the conditional channel is defined as
  \begin{equation*}
    \channel(B\mid x_{1:n}; \theta \in A)
    \defeq
    \frac{\channel(B\cap A\mid x_{1:n})}{\channel(A\mid x_{1:n})}.
  \end{equation*}
\end{definition}
\noindent
We make a few remarks on this definition. It says that the channel
$\channel$ must have large probability of error in testing between samples
$x_{1:n}$ and $x'_{1:n}$, even conditional on the output $\theta$ of the
channel (at least for $\theta$ in sets with high enough probability).  The
definition is nontrivial only for $\diffp < 1$.  Unlike the notions of
privacy introduced earlier (DP, TVP, and HTP), which are inherited
(Observation~\ref{observation:information-processing}),
CHTP is not inherited---there exist channels $Q'\preceq Q$ where $Q$ is
$(\diffp,\delta)$-CHTP while $Q'$ is not.


While expression~\eqref{eqn:conditional-test} is superficially similar to
our earlier testing-based definitions of privacy, its reliance on the
conditioning set $A$ is important. It addresses the criticism of our
original definition of testing-based privacy
(cf.\ Example~\ref{ex:release-one}), which only provides \emph{a priori}
protection. This new definition says that even \emph{after} observing the
output of the channel $\channel$, it is hard to test accurately between
samples $x_{1:n}$ and $x'_{1:n}$ differing in only a single entry; this
posterior protection is more substantial.  Another way to interpret
posterior privacy, as compared to {\em a priori} privacy, is that we would
like to limit the accuracy of hypothesis tests even when the hypotheses
$H_0$ and $H_1$ and the test $\test$ are constructed adaptively upon
observing the output of the channel $\channel$.  To contrast with our
earlier definitions, recall Example~\ref{ex:release-one}
(``release-one-at-random''). Under the release-one channel, we have little
power {\em a priori} to test hypotheses $H_0:X_i=x_i$ and $H_1:X_i=x_i'$
against each other, since it is unlikely (probability $\frac{1}{n}$) that
the $i$th data point will be released. However, writing
$i_{\text{released}}$ to denote the index of the randomly released data
point, we are able to test hypotheses about $X_{i_{\text{released}}}$ with
perfect accuracy. Requiring conditional hypothesis testing privacy, on the
other hand, accounts for this issue and does not allow the
``release-one-at-random'' mechanism.

Interestingly, we can show that Definition~\ref{def:chtp}
is essentially equivalent to (approximate) differential
privacy, once we account for the issue of ``inheritance'' of the CHTP property:

\begin{theorem}
  \label{theorem:chtp-implies-diffp}
  If $\channel$ is $(\diffp,\delta)$-DP, then it is $(\diffpch,\deltach)$-CHTP
  where
  \begin{equation*}
    \diffpch = (1+e^{-\diffp})\cdot (1-e^{-2\diffp}) \leq 4 \diffp
    ~~~\mbox{and}~~~
    \deltach =  (e^{2\diffp}-e^{\diffp})^{-1}\delta
    \leq \diffp^{-1}\delta.
  \end{equation*}
Conversely,  suppose that for some $\diffpch<1$ and $\deltach$,
 $\channel'$ is $(\diffpch, \deltach)$-CHTP for every $\channel'\deficient\channel$. Then
 $\channel$ is $(\diffp, \delta)$-DP with
  \begin{equation*}
    \diffp =2 \log \left(\frac{1 + \diffpch}{1 -  \diffpch}\right)
    \le \frac{4\diffpch}{1 - \diffpch}
    ~~~\text{and}~~~
    \delta= \deltach \cdot \frac{1+\diffpch}{1-\diffpch}.
  \end{equation*}
\end{theorem}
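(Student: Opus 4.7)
The plan is to reformulate CHTP via Le Cam's inequality~\eqref{eqn:le-cam} as a total-variation bound on conditional output distributions. Concretely, for samples $x_{1:n}, x'_{1:n}$ with $\dham(x_{1:n},x'_{1:n})\le 1$ and a set $A\subset\Theta$ with $\alpha := \channel(A\mid x_{1:n})$ and $\beta := \channel(A\mid x'_{1:n})$ both at least $\deltach$, the CHTP condition~\eqref{eqn:conditional-test} is, by~\eqref{eqn:le-cam} applied to the conditional channels, equivalent to
\begin{equation*}
  \tvnorm{\channel(\cdot\mid x_{1:n},\theta\in A) - \channel(\cdot\mid x'_{1:n},\theta\in A)} \le \diffpch.
\end{equation*}
Both directions then reduce to translating between an approximate likelihood-ratio bound on unconditional distributions (DP) and a TV bound on conditional distributions (CHTP).

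For the forward implication, I would fix $B\subset A$ and, substituting the DP inequality $\channel(B\mid x_{1:n}) \le e^\diffp\, \channel(B\mid x'_{1:n}) + \delta$, expand
\begin{equation*}
  \frac{\channel(B\mid x_{1:n})}{\alpha} - \frac{\channel(B\mid x'_{1:n})}{\beta}
  \le \channel(B\mid x'_{1:n})\left(\frac{e^\diffp}{\alpha} - \frac{1}{\beta}\right) + \frac{\delta}{\alpha}.
\end{equation*}
A short case analysis on the sign of $e^\diffp\beta-\alpha$, combined with the DP-implied comparison $\alpha \le e^\diffp\beta+\delta$ (and its swap) and the hypothesis $\alpha\wedge\beta\ge\deltach$, yields the TV bound $\diffpch$ with the stated choice $\deltach = (e^{2\diffp}-e^\diffp)^{-1}\delta$. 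Running the argument symmetrically with $x_{1:n}$ and $x'_{1:n}$ interchanged controls both sides of the absolute value, closing the forward direction.

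For the converse, the workhorse post-processings are the ternary channels $\channel'_{A,B}:\Theta\to\{0,1,2\}$ that send $\theta$ to $1$ on $B$, to $0$ on $A\setminus B$, and to $2$ otherwise, for any $B\subset A\subset\Theta$. Each is a deterministic function of $\theta$, so $\channel'_{A,B}\deficient \channel$; by hypothesis each is therefore $(\diffpch,\deltach)$-CHTP. Applying its CHTP property to the conditioning set $\{0,1\}$ (where the required lower bound reduces to $\alpha\wedge\beta\ge\deltach$) together with the test $z\mapsto \indicbs{z=1}$ recovers
\begin{equation*}
  \left|\frac{\channel(B\mid x_{1:n})}{\alpha} - \frac{\channel(B\mid x'_{1:n})}{\beta}\right| \le \diffpch \quad \text{for every } B\subset A \text{ with } \alpha\wedge\beta\ge\deltach.
\end{equation*}
To promote this family of conditional-TV bounds to $(\diffp,\delta)$-DP for a target set $C\subset\Theta$, I would take $B=C$ inside a carefully chosen superset $A\supseteq C$ to obtain $\channel(C\mid x_{1:n}) \le (\alpha/\beta)\,\channel(C\mid x'_{1:n}) + \diffpch\,\alpha$; the key is then to apply the bound a second time (to the piece $A\setminus C$ inside a still larger conditioning set) to control the ratio $\alpha/\beta$ by $(1+\diffpch)/(1-\diffpch) = e^{\diffp/2}$. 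Composing the two applications yields the multiplicative factor $e^\diffp = [(1+\diffpch)/(1-\diffpch)]^2$ and the additive slack $\deltach(1+\diffpch)/(1-\diffpch)$. The degenerate case where no admissible $A$ exists (i.e.\ $\channel(C\mid x_{1:n})$ or $\channel(C\mid x'_{1:n})$ is below $\deltach$) is absorbed into the claimed $\delta$, which by construction exceeds $\deltach$.

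The main obstacle is the converse: one must choose the auxiliary supersets for each target $C$ so that simultaneously $\alpha\wedge\beta\ge\deltach$, the ratio $\alpha/\beta$ is controlled by the conditional-TV bound (yielding the tight $e^{\diffp/2}$ factor per application rather than a looser $e^\diffp$), and $\diffpch\alpha$ remains within the claimed $\delta$. Striking this three-way balance via two coupled applications of the bound is what produces the exact $\tanh$-like constants appearing in the theorem statement.
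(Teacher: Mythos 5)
Your forward direction is essentially the paper's argument: both amount to applying Bayes' rule together with the $(\diffp,\delta)$-DP inequality to the numerator and denominator of the conditional probabilities, using $\channel(A\mid x'_{1:n})\ge\deltach$ to absorb the additive $\delta$ terms. You have not carried the computation far enough to confirm the specific constant $\diffpch=(1+e^{-\diffp})(1-e^{-2\diffp})$, but the route is the right one and the case analysis you describe would close it.

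The converse, however, has a genuine gap. Your only post-processings are the deterministic maps $\channel'_{A,B}$, and for a deterministic $g:\Theta\to\mc{Z}$, conditioning $g_{\#}\channel$ on a set $\wt{A}\subset\mc{Z}$ and testing is the same as conditioning $\channel$ on $g^{-1}(\wt{A})$ and using the pulled-back test. So the family of conditional-TV bounds you extract is exactly what CHTP of $\channel$ \emph{itself} already gives; you never use the full strength of the hypothesis, which quantifies over all $\channel'\deficient\channel$, including randomized ones. This matters: the paper explicitly notes that CHTP is not inherited under post-processing, and the proof of the converse hinges on a randomized augmentation $\wt{\channel}(\cdot\mid X)=\channel(\cdot\mid X)\times\uniform[0,1]$. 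Given a violating set $B$ with $\channel(B\mid x_{1:n})>e^\diffp\channel(B\mid x'_{1:n})+\delta$, the auxiliary uniform coordinate is used to slice $B$ and $C=B^c$ into pieces $\wt{B}=B\times[0,t_B]$ and $\wt{C}=C\times[0,e^{-\diffp/2}t_C]$ whose probabilities are exactly calibrated so that, conditioning on $\wt{A}=\wt{B}\cup\wt{C}$, the test $\indicbs{y\in\wt{C}}$ has error sum strictly below $2/(e^{\diffp/2}+1)=1-\diffpch$. Without the atomless auxiliary coordinate one cannot in general (e.g.\ for discrete $\Theta$) find subsets of $B$ and $B^c$ with the required probabilities, and this is precisely why your ``three-way balance'' between $\alpha\wedge\beta\ge\deltach$, control of $\alpha/\beta$, and keeping $\diffpch\alpha\le\delta$ cannot be struck with supersets $A\supseteq C$ chosen inside $\Theta$: making $\diffpch\alpha$ small forces $\alpha$ small, while controlling $\alpha/\beta$ requires a second admissible conditioning set whose existence you cannot guarantee. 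The fix is not a cleverer choice of supersets but the introduction of the product channel with an independent uniform, which is a less informative channel to which the hypothesis applies and on which the calibrated sets exist by construction.
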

\noindent
See Appendix~\ref{sec:proof-chtp-implies-diffp} for a proof of
this theorem.

We have thus come full circle: differential privacy appears to be a strong
requirement, so the simple \emph{a priori} variants of testing-based privacy
may seem more natural, requiring only that the chances of discovering any
particular person in a dataset are small. However, the
``release-one-at-random'' example motivates us to move away from {\em a
  priori} privacy towards the posterior privacy guaranteed by the new notion
of conditional hypothesis testing---which is equivalent to differential
privacy.

\section{Lower bounds on estimation of population quantities}
\label{sec:lower-bounds}

Essential to any proposed definition of privacy or disclosure risk is an
understanding of the fundamental consequences for inferential procedures and
estimation.  With that in mind, and having provided several potential
privacy definitions, we turn to an elucidation of some of their
estimation-theoretic consequences. In particular, we consider minimax risk,
defined as follows. Let $\mc{P}$ denote a family of distributions
supported on a set $\mc{X}$, and let $\theta : \mc{P} \to \Theta$ denote a
population quantity of interest.  We also require an error metric with which
to measure the performance of an estimator; to that end, we let $\metric :
\Theta \times \Theta \to \R_+$ denote a (semi)metric on the space $\Theta$,
and let $\loss : \R_+ \to \R$ be a loss function.  For a fixed
privacy-preserving channel $\channel$, the maximum error for estimation of
the population parameter $\theta(P)$ is
\begin{equation}
  \label{eqn:minimax-risk-Q}
  \minimax_n(\theta(\mc{P}), Q, \loss \circ \metric)
  \defeq
  \sup_{P \in \mc{P}}
  \E_{P,Q}\left[\loss(\metric(\what{\theta}(X_{1:n}), \theta(P)))\right],
\end{equation}
where the expectation is taken over both the sample $X_{1:n}$ and the
estimator $\what{\theta}(X_{1:n})$.  To be precise, the data $X_1,\dots,X_n$
are drawn i.i.d.\ from the distribution $P$, then
the estimator $\what{\theta}(X_{1:n})$ is drawn according to
the channel $\channel(\cdot \!\mid X_{1:n})$ conditional on $X_{1:n}$.

We are interested in minimizing this error over all possible
privacy-preserving mechanisms, so that for a family $\mc{Q}$ (i.e.\ the
set of channel distributions $Q$ satisfying some chosen
definition of privacy), we study the \emph{minimax risk} for
estimation of the population parameter $\theta(P)$, defined as
\begin{equation}
  \minimax_n(\theta(\mc{P}), \mc{Q}, \loss \circ \metric)
  \defeq \inf_{Q \in \mc{Q}} \minimax_n(\theta(\mc{P}), Q, \loss \circ
  \metric) = \inf_{Q \in \mc{Q}}
  \sup_{P \in \mc{P}}
  \E_{P,Q}\left[\loss(\metric(\what{\theta}(X_{1:n}), \theta(P)))\right].
  \label{eqn:minimax-def}
\end{equation}

As a concrete example, consider the problem of mean estimation over the
space $\mc{X}=\R^d$.  After drawing the sample $X_1,\dots,X_n$ i.i.d.\ $P$,
we would like to estimate the mean of this distribution. Consider the family
$\mc{P}$ of all distributions $P$ such that $\E_P[\ltwo{X}^2] \le 1$, and
let $\theta(P) = \E_P[X] \in \R^d$ be the mean of $P$ (the parameter we wish
to estimate).  In this setting, we might use mean-squared error, taking
$\loss(t) = t^2$ and $\metric(\theta, \theta') = \ltwo{\theta -
  \theta'}$. The minimax risk is then
\begin{equation*}
  \minimax_n(\theta(\mc{P}), \mc{Q}, \loss \circ \metric)
  = \inf_{Q \in \mc{Q}}
  \sup_{P : \E_P[\ltwo{X}^2]\le 1}
  \E_{P,Q}\left[\ltwo{\what{\theta}(X_{1:n}) - \E_P[X]}^2\right].
\end{equation*}
Our goal, for the remainder of this section, is to find lower bounds on this
minimax error (both for the mean estimation problem and the general setting)
under each of the privacy frameworks in the prequel. In
Section~\ref{sec:upper-bounds}, we derive upper bounds on the minimax
error for mean estimation via concrete constructions of private channels
$\channel$ under the various frameworks.

\paragraph{Our approach}
A standard route for lower bounding the minimax
risk~\eqref{eqn:minimax-risk-Q} is to reduce the estimation problem to a
testing problem, where we aim to identify a point $\optvar \in \optdomain$
from a finite collection of well-separated points~\cite{Yu97, YangBa99}.
Given an index set $\packset$ of finite cardinality, the indexed family of
distributions $\{\statprob_\packval, \packval \in \packset\} \subset \mc{P}$
is said to be a $2\delta$-packing of $\optdomain$ if
$\metric(\optvar(\statprob_\packval), \optvar(\statprob_{\altpackval})) \ge
2 \delta$ for all $\packval \neq \altpackval\in\packset$.

\newcommand{\channels}{\mc{Q}}

In the standard hypothesis testing problem (without privacy constraints),
nature chooses $\packrv \in \packset$ uniformly at random, then (conditional
on $\packrv = \packval$) draws a sample $\statrv_1, \dots, \statrv_\numobs$
i.i.d.\ from the distribution $\statprob_\packval$; the problem is to
identify the member $\packrv$ of the packing set $\packset$. Several
techniques exist for lower bounding the risk of this testing problem (see,
for example, \citet{Yu97}, \citet{Tsybakov09}, or \citet{YangBa99} for a
survey of such techniques). In short, however, under the $2\delta$-packing
construction above, each begins with the classical reduction of estimation
to testing that
\begin{equation*}
  \minimax_n(\theta(\mc{P}), \loss \circ \metric)
  \defeq \inf_{\what{\theta}} \sup_{P \in \mc{P}}
  \E_P[\loss(\metric(\what{\theta}(X_{1:n}), \theta(P)))]
  \ge \loss(\delta) \inf_{\test} \P(\test(\statrv_{1:n}) \neq \packrv),
\end{equation*}
where $\P$ denotes probability 
under the joint distribution of both $\packrv$ and the sample
$\statrv_{1:n}$.
In this work, we have the additional complication that,
instead of releasing a parameter $\what{\theta}$ computed directly
on the sample $X_{1:n}$, our estimator must satisfy some type of
privacy---it is drawn from some channel $\channel(\cdot \!\mid\! \cdot)$.
In particular, we immediately have the following extension of the
classical lower testing-based lower bound: for any $2\delta$-packing
and family $\channels$ of channels,
\begin{equation}
  \label{eqn:estimation-to-testing}
  \minimax_n(\theta(\mc{P}), \channels, \loss \circ \metric)
  \ge \loss(\delta) \inf_{\channel \in \channels} \inf_\test
  \P_{\channel}\left(\test(\what{\theta}(X_{1:n})) \neq \packrv\right),
\end{equation}
where $\what{\theta}$ is distributed according to $\channel(\cdot \mid
X_{1:n})$ and $\P_\channel$ denotes the joint distribution of the packing
index $\packrv$, sample $X_{1:n}$, and $\what{\theta}$ drawn from
$\channel$. In the next sections, we show how to derive lower bounds for
this more complex problem.

\subsection{Lower bounds for weak forms of privacy}

We begin by focusing on private estimation under the weakest privacy setting
we have defined: the $\diffp$-$f$-divergence privacy settings (recall
Definitions~\ref{definition:tv-private}
and~\ref{definition:divergence-privacy}).  In particular, we prove all
results in this subsection using $\diffp$-total variation privacy; this is,
in a sense, the smallest $f$-divergence (cf.~\citet[Section V]{LieseVa06},
where it is shown that all $f$-divergences can be written as mixtures of
variation-like distances) and thus the weakest form of privacy. The lower
bounds we prove here extend immediately to all the definitions of privacy in
this paper, as all the variants of differential privacy
(Definitions~\ref{definition:differential-privacy},
\ref{definition:smooth-privacy}, \ref{definition:approximate-dp}) and
KL-divergence privacy~\eqref{eqn:kl-private} imply total variation privacy.

For a channel $\channel$, the information available to an observer about the
original distribution $P$ of the data is disguised via $\channel$.  To that
end, for a channel $\channel$ and distribution $\statprob_\packval$, we
define the marginal
\begin{equation*}
  \marginprob^n_\packval(A) \defeq \int_{\statdomain^n}
  \channel(A \mid \statsample_{1:n}) d\statprob^n_\packval(\statsample_{1:n}),
\end{equation*}
where $\statprob_\packval^n$ is the $n$-fold product distribution (that is,
$X_{1:n}\sim \statprob_\packval^n$ is equivalent to
$X_1,\dots,X_n \simiid \statprob_\packval$).  This is the
marginal distribution of the privately released estimator $\what{\theta}$
when the initial sample $X_{1:n}$ is drawn from $\statprob_\packval^n$.

Our first set of lower bounds builds on Le Cam's two-point method
(cf.~\cite{Yu97}), which relates minimax errors directly to a binary
hypothesis test; in this case, the packing set consists only of the
distributions $P_0$ and $P_1$, each chosen with probability $\half$.  Let
$\test : \Theta \to \{0, 1\}$ denote an arbitrary testing procedure based on
the output of the private mechanism $\channel$.  Then Le Cam's method gives
the following lower bound (recall expression~\eqref{eqn:le-cam}; see
also~\cite[Theorem 2.2]{Yu97,Tsybakov09}), where in the lemma $\P$ denotes
the joint distribution of the random variable $\packrv \in \{0, 1\}$, the
sample $X_{1:n} \simiid P_\packval$ conditional on $\packrv = \packval$, and
the estimator $\what{\theta} \sim \channel(\cdot \!\mid X_{1:n})$.
\begin{lemma}[Le Cam's method]
  \label{lemma:le-cam}
   For the binary test described above, the probably
   of making an error is lower bounded as
  \begin{equation*}
    \inf_\test \P\left(\test(\what{\theta}) \neq \packrv\right)
    = \half - \half
    \tvnorm{\marginprob^n_0 - \marginprob^n_1},
  \end{equation*}
 where the infimum is taken over all testing procedures.
\end{lemma}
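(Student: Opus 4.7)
The plan is to decompose the error probability by conditioning on the uniformly drawn index $\packrv \in \{0,1\}$ and then directly invoke Le Cam's identity~\eqref{eqn:le-cam} with the marginals $\marginprob_0^n$ and $\marginprob_1^n$ playing the roles of the two distributions. The key observation is that under the prior $\P(\packrv = v) = \half$, the marginal law of the released estimator $\what{\theta}$ given $\packrv = v$ is exactly $\marginprob_v^n$, since
\begin{equation*}
    \P\bigl(\what{\theta} \in A \mid \packrv = v\bigr)
    = \int \channel(A \mid x_{1:n}) \, dP_v^n(x_{1:n})
    = \marginprob_v^n(A).
\end{equation*}
This is what makes it legitimate to treat the post-processing through $\channel$ as producing an ordinary two-point testing problem at the level of $\what{\theta}$.

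Next, I would expand the error probability as
\begin{equation*}
    \P\bigl(\test(\what{\theta}) \neq \packrv\bigr)
    = \half \marginprob_0^n(\test = 1) + \half \marginprob_1^n(\test = 0),
\end{equation*}
and then take the infimum over all measurable tests $\test : \Theta \to \{0, 1\}$. Pulling the factor of $\half$ outside and applying~\eqref{eqn:le-cam} with $P_0 = \marginprob_0^n$ and $P_1 = \marginprob_1^n$ gives
\begin{equation*}
    \inf_\test \P\bigl(\test(\what{\theta}) \neq \packrv\bigr)
    = \half \bigl(1 - \tvnorm{\marginprob_0^n - \marginprob_1^n}\bigr),
\end{equation*}
which is the claimed identity.

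There is essentially no obstacle here: the result is a direct specialization of the variational characterization of total variation that has already been stated in the excerpt. The only point that deserves care is verifying that the infimum over tests of $\what{\theta}$ is indeed achieved (or approached) by the likelihood-ratio-type test $\test(\theta) = \indicb{d\marginprob_1^n/d\marginprob_0^n (\theta) \ge 1}$, which is the standard argument behind~\eqref{eqn:le-cam}; this requires only that $\marginprob_0^n$ and $\marginprob_1^n$ be dominated by a common $\sigma$-finite measure, which holds automatically since one may take $\mu = \marginprob_0^n + \marginprob_1^n$. Everything else reduces to algebra.
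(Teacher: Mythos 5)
Your proof is correct and is exactly the argument the paper intends: the lemma is stated as a direct consequence of the variational identity~\eqref{eqn:le-cam} applied to the marginals $\marginprob_0^n$ and $\marginprob_1^n$, after observing that the conditional law of $\what{\theta}$ given $\packrv = \packval$ is $\marginprob_\packval^n$. Nothing further is needed.
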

\noindent

With this result in mind, if we can prove that the marginals
$\marginprob_\packval$ are substantially closer in variation distance than
are the $P_\packval$, we may obtain sharper minimax lower bounds on
estimation.  To that end, we prove the following quantitative data
processing inequality, which says that for small privacy parameter $\diffp$
(i.e.~a high privacy level), the output of the channel contains relatively
little information about the true distribution $P$. (See
Sec.~\ref{sec:proof-contraction} for a proof.)
\begin{theorem}
  \label{theorem:contraction}
  Let $\statprob_0$ and $\statprob_1$ be
  probability distributions on $\statdomain$ and $\statprob_\packval^n$,
  $\packval \in \{0, 1\}$, be
  their $n$-fold products. Under $\diffp$-total-variation privacy
  (definition~\ref{definition:tv-private}),
  \begin{equation*}
    \tvnorm{\marginprob_0^n - \marginprob_1^n} \le
    \left(2 n \diffp \tvnorm{\statprob_0 - \statprob_1}\right)
    \wedge \tvnorm{\statprob_0^n - \statprob_1^n}.
  \end{equation*}
\end{theorem}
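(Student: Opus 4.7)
The plan is to establish the two bounds separately. The second bound $\tvnorm{\marginprob_0^n - \marginprob_1^n} \le \tvnorm{\statprob_0^n - \statprob_1^n}$ is immediate from the data processing inequality for total variation distance: the marginals $\marginprob_k^n$ are obtained from $\statprob_k^n$ by applying the common Markov kernel $\channel$, and a Markov kernel cannot increase TV distance. So the real work is in the first bound, $\tvnorm{\marginprob_0^n - \marginprob_1^n} \le 2 n \diffp \tvnorm{\statprob_0 - \statprob_1}$, which uses privacy crucially.

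For that bound, first I would telescope the TV privacy constraint along the Hamming distance. Since $\diffp$-TV privacy gives $|\channel(A \mid y_{1:n}) - \channel(A \mid y'_{1:n})| \le \diffp$ whenever $\dham(y_{1:n}, y'_{1:n}) \le 1$, a chain of single-coordinate modifications through the triangle inequality yields
\begin{equation*}
  |\channel(A \mid x_{1:n}) - \channel(A \mid x'_{1:n})|
  \le \diffp \cdot \dham(x_{1:n}, x'_{1:n})
  \quad \text{for all } x_{1:n}, x'_{1:n} \in \statdomain^n
  \text{ and measurable } A.
\end{equation*}

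Next I would build a coupling that makes the Hamming distance small in expectation. Let $\pi$ be an optimal coupling of $\statprob_0$ and $\statprob_1$ on $\statdomain \times \statdomain$ achieving $\pi(\{(x,x'): x \neq x'\}) = \tvnorm{\statprob_0 - \statprob_1}$, and let $\pi^n$ be its $n$-fold product. Under $\pi^n$, the marginals of $(X_{1:n}, X'_{1:n})$ are $\statprob_0^n$ and $\statprob_1^n$, and by linearity of expectation $\E_{\pi^n}[\dham(X_{1:n}, X'_{1:n})] = n \tvnorm{\statprob_0 - \statprob_1}$. Then for any measurable $A \subset \Theta$,
\begin{equation*}
  |\marginprob_0^n(A) - \marginprob_1^n(A)|
  = \bigl| \E_{\pi^n}[\channel(A \mid X_{1:n}) - \channel(A \mid X'_{1:n})] \bigr|
  \le \diffp \E_{\pi^n}[\dham(X_{1:n}, X'_{1:n})]
  = n \diffp \tvnorm{\statprob_0 - \statprob_1},
\end{equation*}
and taking the supremum over $A$ gives the claimed bound (with room to spare relative to the stated factor of $2$).

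The main obstacle is really just the bookkeeping in the telescoping step and the choice of coupling; neither is deep, but one must be careful that $\pi^n$ has the right marginals $\statprob_0^n, \statprob_1^n$ (which the product construction guarantees) and that the privacy constraint is applied with the correct interpretation of $\dham$. Note also that $\diffp$-TV privacy is the weakest among the privacy notions established earlier, so no additional assumptions beyond Definition~\ref{definition:tv-private} are needed; the other privacy definitions inherit the conclusion through the implications already noted. Combining both bounds via minimum yields the theorem.
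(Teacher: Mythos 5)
Your proposal is correct, and it takes a genuinely different route from the paper. The paper proves the privacy-dependent bound with a hybrid (telescoping) decomposition of the \emph{product measure}: it writes $d\statprob_0^n - d\statprob_1^n = \sum_{i=1}^n d\statprob_1^{i-1}\,(d\statprob_{0,i} - d\statprob_{1,i})\, d\statprob_0^{n-i}$, inserts for each $i$ a reference value $\channel(A \mid x_{\setminus i}, x_i')$ (which integrates to zero against the signed measure $d\statprob_{0,i} - d\statprob_{1,i}$), and then bounds each summand by $\diffp \cdot \int |d\statprob_{0,i} - d\statprob_{1,i}| = 2\diffp \tvnorm{\statprob_0 - \statprob_1}$; this is where the factor of $2$ comes from. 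You instead telescope the privacy constraint on the \emph{channel} to get the Hamming--Lipschitz bound $|\channel(A\mid x_{1:n}) - \channel(A\mid x'_{1:n})| \le \diffp\, \dham(x_{1:n}, x'_{1:n})$ and then push a product of maximal couplings through it; since $\E_{\pi^n}[\dham(X_{1:n},X'_{1:n})] = n\tvnorm{\statprob_0 - \statprob_1}$ exactly, you obtain the sharper constant $n\diffp\tvnorm{\statprob_0-\statprob_1}$ rather than $2n\diffp\tvnorm{\statprob_0-\statprob_1}$, which of course still implies the stated result. Each approach has a minor advantage: the paper's argument is purely measure-theoretic (no appeal to existence of a maximal coupling, which requires mild regularity of $\statdomain$ but is standard) and is stated directly for independent non-identically distributed coordinates; your coupling argument extends to that setting just as easily by taking a product of per-coordinate maximal couplings, and it isolates the source of the contraction more transparently. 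The data-processing half of the claim is handled identically in both.
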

\noindent
We now give two applications of this contraction inequality to classical
estimation problems.

\subsubsection{Mean estimation under total variation privacy}

As our first example, we consider estimation of a mean with mean-squared
error as our metric, studying a natural family of distributions for this
setting. In particular, we consider families of distributions with
conditions on their moments that allow efficient estimation in non-private
settings. In particular, define the family $\mc{P}_\moment^d(r)$ of
distributions supported on $\R^d$ by
\begin{equation}
  \mc{P}_\moment^d(r) \defeq \left\{ P
  \mid \supp P \subset \R^d
  ~ \mbox{and} ~ \E_P[\ltwo{X}^\moment] \le r^\moment \right\},
  \label{eqn:moment-distributions}
\end{equation}
that is, distributions on $\R^d$ with $\moment$th moment bounded by
$r^\moment$.  For any $\moment \ge 2$, the minimax mean-squared
error~\eqref{eqn:minimax-def} for mean estimation in this family is bounded
by $r^2 / n$ when there are no privacy constraints. Indeed, taking the
sample mean as our estimator gives risk bounded as
\begin{equation*}
  \E\bigg[\ltwobigg{\frac{1}{n} \sum_{i=1}^n X_i - \E[X]}^2\bigg]
  \le \frac{1}{n} \E[\ltwo{X - \E[X]}^2]
  \le \frac{1}{n} \E[\ltwo{X}^2]
  \le \frac{1}{n} \E[\ltwo{X}^\moment]^{\frac{2}{\moment}}
  \le \frac{r^2}{n}.
\end{equation*}
However, after adding a privacy constraint, we have the
following result, which is a
consequence of inequality~\eqref{eqn:estimation-to-testing},
Lemma~\ref{lemma:le-cam}, and Theorem~\ref{theorem:contraction}.
\begin{proposition}
  \label{proposition:minimax-lower-tv}
  Consider the problem of mean estimation over the
  class~\eqref{eqn:moment-distributions} of distributions. If
  $\tvchannels{\diffp}$ denotes the family of $\diffp$-TV-private channels,
  then
  \begin{equation}
    \label{eqn:tv-lower-bound}
    \minimax_n(\theta(\mc{P}_\moment^d(r)), \tvchannels{\diffp}, |\cdot|^2)
    \gtrsim \frac{r^2}{n}
    + r^2\left(\frac{1}{\diffp^2 n^2}\right)^{\frac{\moment - 1}{\moment}}.
  \end{equation}
\end{proposition}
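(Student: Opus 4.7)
The plan is to prove each of the two terms in the lower bound~\eqref{eqn:tv-lower-bound} separately by applying Le Cam's method (Lemma~\ref{lemma:le-cam}), combined with the reduction~\eqref{eqn:estimation-to-testing} and the privacy-contraction inequality (Theorem~\ref{theorem:contraction}). In each case I would choose a two-point packing $P_0, P_1 \in \mc{P}_\moment^d(r)$ such that the means $\theta(P_0)$ and $\theta(P_1)$ are $2\delta$-separated in $\ell_2$, then upper-bound the marginal total variation $\tvnorm{\marginprob_0^n - \marginprob_1^n}$ using Theorem~\ref{theorem:contraction}, and finally optimize $\delta$ subject to making the resulting testing error at least a constant (say, $\ge 1/4$). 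Because the bound is univariate up to signs, I would work on a single coordinate in $\R^d$ and let the remaining coordinates be $0$, which reduces the problem to the scalar case.

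For the first term $r^2/n$, I would pick $P_0 = \mc{N}(-\delta e_1, \sigma^2 I)$ and $P_1 = \mc{N}(+\delta e_1, \sigma^2 I)$ with $\sigma$ of order $r$, which lies in $\mc{P}_\moment^d(r)$ up to an absolute constant depending on $\moment$, since Gaussian moments are bounded by a constant times $\sigma^\moment$. Here I would use the second branch of the minimum in Theorem~\ref{theorem:contraction} and bound $\tvnorm{P_0^n - P_1^n}$ by $\sqrt{n\dkl{P_0}{P_1}/2}$ via Pinsker. With $\dkl{P_0}{P_1} \lesssim \delta^2/r^2$, choosing $\delta \asymp r/\sqrt{n}$ forces the testing error at least $1/4$, giving the classical $r^2/n$ bound via~\eqref{eqn:estimation-to-testing}.

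For the privacy term, I would construct a ``light-tailed spike'' packing: fix $M>0$ and $p \in (0,1)$, and let $P_v$ for $v \in \{-1,+1\}$ put mass $1-p$ at the origin and mass $p$ at $v M e_1$. Then $\theta(P_v) = vpMe_1$, so $\ltwo{\theta(P_{-1}) - \theta(P_{+1})} = 2pM \eqdef 2\delta$; the moment constraint $\E_{P_v}[\ltwo{X}^\moment] = pM^\moment \le r^\moment$ forces $M \le r/p^{1/\moment}$; and $\tvnorm{P_{-1} - P_{+1}} = p$. Applying the first branch of Theorem~\ref{theorem:contraction} yields
\begin{equation*}
  \tvnorm{\marginprob_{-1}^n - \marginprob_{+1}^n} \le 2n\diffp p.
\end{equation*}
Choosing $p = c/(n\diffp)$ for a small enough absolute constant $c$ keeps the testing error bounded below by $1/4$, while the separation satisfies $\delta = pM \le r p^{1-1/\moment} \asymp r(n\diffp)^{-(\moment-1)/\moment}$. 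Squaring yields the second term $r^2(\diffp^2 n^2)^{-(\moment-1)/\moment}$ via~\eqref{eqn:estimation-to-testing} and Lemma~\ref{lemma:le-cam}. Combining the two packings (taking the larger of the two lower bounds) gives the sum in~\eqref{eqn:tv-lower-bound}.

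The main obstacle is selecting a packing whose moment constraint is saturated in precisely the way that exposes the moment exponent $(\moment-1)/\moment$: the three-point spike distribution is essentially the extremal construction, trading off between large ``effect size'' $M$ (bounded by the moment condition) and small $p$ (needed so that TV distance, and hence the privacy-contracted testing information, stays small). All other steps, including verifying moment bounds, computing Gaussian KL divergences, and applying Le Cam's inequality, are routine.
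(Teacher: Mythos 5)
Your proposal is correct and follows essentially the same route as the paper: the paper's proof uses exactly your ``spike'' construction (mass $1-\delta$ at the origin and mass $\delta$ at $\pm r\delta^{-1/\moment}e_1$, which is your $p$ and $M=rp^{-1/\moment}$), applies Le Cam's method with the contraction bound $2n\diffp\tvnorm{P_0-P_1}$ from Theorem~\ref{theorem:contraction}, and chooses $\delta = 1/(4n\diffp)$. The only cosmetic difference is that the paper handles the $r^2/n$ term by citing the classical non-private minimax lower bound rather than rederiving it via a Gaussian two-point argument.
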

\begin{proof}
  We apply Le Cam's method and the lower
  bound~\eqref{eqn:estimation-to-testing}.  First, we fix $\delta > 0$ (to
  be chosen later), and we define the distributions $P_0$ and $P_1$ on $\{-r
  \delta^{-1/\moment}, 0, r\delta^{-1/\moment}\}$ via
  \begin{equation*}
    P_0(X = -r\delta^{-1/\moment}) = \delta,
    ~~~ P_1(X = r\delta^{-1/\moment}) = \delta,
    ~~~ \mbox{and} ~~~
    P_0(X = 0) = P_1(X = 0) = 1 - \delta.
  \end{equation*}
  Their respective means are $\theta_0 = \E_{P_0}[X]
  = -r \delta^{1-1/\moment}$ and $\theta_1 = r \delta^{1-1/\moment}$, while
  $\tvnorm{P_0 - P_1} = \delta$ and $\E[|X|^\moment] = r^\moment
  (\delta^{-1/\moment})^\moment \delta = r^\moment$. Via
  the estimation-to-testing bound~\eqref{eqn:estimation-to-testing},
  we have the
  following minimax lower bound for any $\diffp$-total variation private
  channel $\channel$:
  \begin{align*}
    \minimax_n(\theta(\mc{P}_\moment^1(r)), Q, |\cdot|^2)
    & \ge
    \half \left(r\delta^{1-1/k}\right)^2
    \inf_{\test} \left \{ \P_{Q,P_0} \left(\test(\what{\theta}) \neq 0
    \right) +  \P_{Q,P_1} \left(\test(\what{\theta}) \neq 1 \right)
    \right\} \\
    & = \frac{r^2 \delta^{2 - 2 / \moment}}{2}
    \left(1 -
    \tvnorm{\marginprob_0^n - \marginprob_1^n}\right)  \\
    & \ge \frac{r^2 \delta^{2 - 2/\moment}}{2}
    \left(1 - 2 n \diffp \tvnorm{P_0 - P_1}\right)
    = \frac{r^2 \delta^{2 - 2/\moment}}{2}
    \left(1 - 2 n \diffp \delta\right),
  \end{align*}
  where we have used the contraction inequality of
  Theorem~\ref{theorem:contraction}.
  Choosing $\delta = 1 / (4 n \diffp)$, we substitute to find
  \begin{equation*}
    \minimax_n(\theta(\mc{P}_\moment^1(r)), Q, |\cdot|^2)
    \ge \frac{r^2\delta^{2 - 2 / \moment}}{4}
    = \frac{r^2}{4 \cdot 4^{2- 2 / \moment}} \left(\frac{1}{n \diffp}\right)^{
      \frac{2 \moment - 2}{\moment}}.
  \end{equation*}
  Our choice of $\channel$ was arbitrary, so
  once we note that the lower bound $r^2 /n$ on minimax
  estimation of a mean holds even in non-private settings, we obtain
  the lower bound.
\end{proof}

Inequality~\eqref{eqn:tv-lower-bound} exhibits some interesting effects of
privacy, even under such weak definitions as total variation privacy. We
might like to let $\diffp$ approach zero---meaning that the privacy
guarantees become stronger---as the sample size $n$ grows. If the
distribution is bounded, with $\norm{X}_2\leq r$ always, then taking
$\moment = \infty$ is possible and the lower bound in
\eqref{eqn:tv-lower-bound} scales as $r^2 / n + r^2 / (n^2 \diffp^2)$.  The
proposition then suggests (and we show later) that we can allow privacy at a
level of $\diffp = 1 / \sqrt{n}$ without negatively affecting convergence
rates.  Under the weaker assumption that $2<\moment < \infty$, however, the
proposition disallows such quickly decreasing $\diffp$; if $\diffp \ll
n^{\frac{2 - \moment}{2 \moment - 2}}$, there is a degradation in rate of
convergence. Moreover, if all we can guarantee is a second moment bound
($\moment = 2$), then any amount of privacy $\diffp < 1$ forces the rate to
degrade, and it is impossible to take $\diffp \to 0$ as $n \to \infty$
without suffering non-parametric rates of convergence.

\subsubsection{Support estimation under total variation privacy}

As our second example, we consider a support estimation problem, where the
goal is to find the support of a uniform distribution. In particular, we
would like to find $\theta \in \R_+$ when we know that $X \sim \uniform[0,
  \theta]$. In the private case, given a sample $X_1, \ldots, X_n \simiid
\uniform[0, \theta]$, a well-known estimator is to use the first order
statistic, $X_{(1)} = \max_i \{X_i\}$. This yields
$\E[(X_{(1)} - \theta)^2] \lesssim \theta^2 / n^2$ (see, for example,
the standard text of \citet{LehmannCa98}). Under even the weakest privacy
constraints, however, this rate is unattainable.
\begin{proposition}
  \label{proposition:uniform-lower-bound}
  Let $\tvchannels{\diffp}$ denote the family of $\diffp$-TV-private
  channels, and for $t > 0$ let $\mc{P}_t$ denote the collection of uniform
  distributions $\uniform[0, \theta]$ with $\theta \le t$. Then
  in absolute value error,
  \begin{equation}
    \label{eqn:uniform-lower-bound}
    \minimax_n(\theta(\mc{P}_t), \tvchannels{\diffp},
    |\cdot|) \ge \frac{1}{32} \frac{t}{n \diffp}.
  \end{equation}
\end{proposition}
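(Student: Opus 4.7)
The plan is to apply Le Cam's two-point method directly to the uniform-support problem, using the contraction estimate of Theorem~\ref{theorem:contraction} to exploit the privacy constraint. The argument closely parallels the proof of Proposition~\ref{proposition:minimax-lower-tv}: we must choose a pair of distributions that are indistinguishable in total variation but whose support parameters are well-separated.

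First, I would choose $P_0 = \uniform[0, t(1-\gamma)]$ and $P_1 = \uniform[0, t]$, with $\gamma \in (0,1)$ a parameter to be optimized. Both distributions lie in $\mc{P}_t$, and $|\theta(P_0) - \theta(P_1)| = t\gamma$, so we obtain a $2\delta$-packing with $\delta = t\gamma/2$. A direct computation of $\half \int |p_0(x) - p_1(x)| dx$, splitting the integral at $x = t(1-\gamma)$ (where $p_0$ vanishes), gives
\begin{equation*}
  \tvnorm{P_0 - P_1} = \frac{1}{2}\left[t(1-\gamma) \cdot \frac{\gamma}{t(1-\gamma)} + t\gamma \cdot \frac{1}{t}\right] = \gamma.
\end{equation*}

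Next, I would combine the estimation-to-testing reduction~\eqref{eqn:estimation-to-testing} (with $\loss(s) = s$) with Le Cam's method (Lemma~\ref{lemma:le-cam}) and Theorem~\ref{theorem:contraction}. For any $\diffp$-TV-private channel $Q$, this chain produces
\begin{equation*}
  \minimax_n(\theta(\mc{P}_t), Q, |\cdot|) \;\ge\; \frac{t\gamma}{2} \cdot \frac{1}{2}\bigl(1 - \tvnorm{\marginprob_0^n - \marginprob_1^n}\bigr) \;\ge\; \frac{t\gamma}{4}\bigl(1 - 2n\diffp\gamma\bigr),
\end{equation*}
where the final inequality uses $\tvnorm{P_0 - P_1} = \gamma$ together with the contraction bound.

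Finally, I would set $\gamma = 1/(4n\diffp)$, which gives $1 - 2n\diffp\gamma = 1/2$ and so yields $t/(32 n\diffp)$; taking the infimum over $Q \in \tvchannels{\diffp}$ delivers the claim. This choice requires $\gamma \le 1$, i.e., $4n\diffp \ge 1$; in the complementary regime the claimed bound exceeds a constant multiple of $t$ and is (up to constants) comparable to the trivial bound, so the case is vacuous. There is no real obstacle in this argument: Theorem~\ref{theorem:contraction} supplies the only nontrivial estimate, and the sole place requiring genuine computation is the variation distance between nested uniforms. The conceptual content is that the moment-type tradeoff behind Proposition~\ref{proposition:minimax-lower-tv} becomes, for support estimation, a geometric tradeoff between how much we shrink the support (which shrinks both the mean-separation and the TV distance linearly in $\gamma$) and how much signal remains after the privacy-induced contraction.
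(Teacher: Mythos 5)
Your proposal is correct and follows essentially the same route as the paper: the same two nested uniform distributions (your $\gamma$ is the paper's $\delta/t$), the same computation $\tvnorm{P_0-P_1}=\gamma$, the same chain of the estimation-to-testing reduction, Le Cam's lemma, and Theorem~\ref{theorem:contraction}, and the same choice $\gamma = 1/(4n\diffp)$ yielding the constant $1/32$. Your remark on the regime $4n\diffp<1$ is a point the paper silently ignores, so no discrepancy there.
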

Note that by Jensen's inequality, the lower
bound~\eqref{eqn:uniform-lower-bound} implies that
\begin{equation*}
  \sup_{\theta \le t} \frac{1}{t^2}
  \E_\theta[(\what{\theta} - \theta)^2] \ge
  \sup_{\theta \le t} \frac{1}{t^2}
  \E_\theta [|\what{\theta} - \theta|]^2
  \gtrsim \frac{1}{n^2 \diffp^2}.
\end{equation*}
There is thus no possible privacy setting allowing estimation at the
statistically efficient rate.

\begin{proof}
  Fix $\delta \in [0, t]$ and consider the two distributions $P_0 =
  \uniform[0, t-\delta]$ and $P_1 = \uniform[0, t]$. Comparing their
  variation distances, we have $\tvnorm{P_1 - P_2} = \delta / t$. Moreover,
  their respective maxima $\theta_0 = t-\delta$ and $\theta_1 = t$ satisfy
  the separation condition $|\theta_0 - \theta_1| = \delta$. Thus, by
  letting $\marginprob^n$ denote the marginal distribution of the released
  statistic, Le Cam's method (Lemma~\ref{lemma:le-cam}) coupled with the
  estimation-to-testing lower bound~\eqref{eqn:estimation-to-testing}
  implies
  \begin{equation*}
    \minimax_n(\theta(\mc{P}_t), \tvchannels{\diffp}, |\cdot|)
    \ge \frac{\delta}{4}
    \left(1 - \tvnorm{\marginprob_0^n - \marginprob_1^n}\right).
  \end{equation*}
  By the contraction inequality of Theorem~\ref{theorem:contraction},
  we obtain the lower bound
  \begin{align*}
    \E[|\what{\theta} - \theta|]
    & \ge \frac{\delta}{4}
    \left(1 - 2 n \diffp \tvnorm{P_0 - P_1}\right)
    = \frac{\delta}{4} \left(1 - \frac{2 n \diffp \delta}{t} \right).
  \end{align*}
  Choosing $\delta = t / (4 n \diffp)$ gives the
  result~\eqref{eqn:uniform-lower-bound}.
\end{proof}


\subsection{Lower bounds with variants of differential privacy}
\label{sec:diffp-lower-bounds}

We now turn to lower bounds on estimation when the mechanism $\channel$
satisfies (a variant of) differential privacy.  We will see that
this implies stronger lower bounds than those
implied by $\diffp$-total variation privacy, as we obtain results that
exhibit dependence on the ambient dimension $d$ as well as on the privacy
parameter $\diffp$. These lower bounds are based on a type of
``uniformity of probability mass'' argument.  Roughly, they
are consequences of a guarantee that differentially private estimators
$\what{\theta}$ assign relatively high probability mass to all parts of the
parameter space $\Theta$ as a consequence of the likelihood ratio guarantee
that is their definition.

As in the previous section, we have a (semi)metric $\metric$ on the
parameter space $\Theta$, and a family of distributions $\mc{P}$, where
$\packset$ indexes a subset $\{P_\packval\}_{\packval \in \packset} \subset
\mc{P}$.  Additionally, we assume there exists a distribution $P_0$ on the
space $\mc{X}$ such that for some (fixed) $p \in [0, 1]$, we have $(1 - p)
P_0 + p P_\packval \in \mc{P}$ for all $\packval\in\packset$.  With this
fixed $p$ in place, we may define the parameters we wish to estimate by
\begin{equation*}
  \theta_\packval \defeq \theta\big((1 - p) P_0 + p P_\packval\big),
\end{equation*}
where $\theta : \mc{P} \to \Theta$ is our population statistic. (We omit $p$
from our notation for $\theta_\packval$, leaving it implicit.) We then
define the separation of the set $\{\theta_\packval\}_{\packval \in
  \packset}$ by
\begin{equation}
  \label{eqn:separation}
  \separation{\packset}
  \defeq \min\left\{\metric(\theta_\packval, \theta_\altpackval)
  \mid \packval, \altpackval \in \packset, \packval \neq \altpackval\right\}.
\end{equation}

Now we come again to a standard testing problem: we choose a private
procedure $\what{\theta}$ (given by a channel $\channel$).  After we make
this choice, nature chooses one of the indices $\packval \in \packset$,
generating a sample $X_1, \ldots, X_n$ drawn i.i.d.\ from the distribution
$(1 - p) P_0 + p P_\packval$.  Our goal is then to estimate the parameter
$\theta_\packval=\theta\big((1 - p) P_0 + p P_\packval\big)$ accurately,
which (essentially) corresponds to identifying the index $\packval$ nature
chooses.  Under this setting, we can develop a result inspired by arguments
of \citet{HardtTa10} and \citet{BeimelBrKaNi13}.  In particular, we show
that private mechanisms necessarily are (non-trivially) likely to release
parameters far away from the true parameter. In our case, however, we study
population parameters rather than sample quantities (in contrast to
\citet{HardtTa10}), approximate privacy, and use a more classical estimation
framework rather than PAC learning~\cite{BeimelBrKaNi13}.

The following theorem (whose proof we give in
Section~\ref{sec:proof-population-packing}) is our main tool for proving
concrete lower lower bounds.
\begin{theorem}
  \label{theorem:population-packing}
  Fix $p \in [0, 1]$, and define $P_{\theta_\packval} = (1 - p) P_0 + p
  P_\packval \in \mc{P}$. Let $\what{\theta}$ be an $(\diffp,
  \delta)$-approximately differentially private estimator. Then
  \begin{equation*}
    \frac{1}{|\packset|}
    \sum_{\packval \in \packset}
    P_{\theta_\packval}\left(\metric(\what{\theta}, \theta_\packval)
    \ge \separation{\packset}\right)
    \ge
    \frac{(|\packset| - 1) \cdot \left(\half e^{-\diffp \ceil{np}}
      - \delta \frac{1 - e^{-\diffp \ceil{np}}}{1 - e^{-\diffp}}\right)}{
      1 + (|\packset| - 1) \cdot e^{-\diffp \ceil{np}}}.
  \end{equation*}
\end{theorem}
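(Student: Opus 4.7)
The plan is to reduce the estimation problem to a multi-way hypothesis test over the packing $\{\theta_\packval\}_{\packval \in \packset}$ and to exploit approximate differential privacy through a group-privacy argument, applied via a coupling in which the ``group'' of coordinates that differ between two hypotheses is controlled by a binomial random variable.

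First I would couple the sampling distributions. Draw latent indicators $Z_1, \dots, Z_n \simiid \mathrm{Bernoulli}(p)$, shared across all $\packval$, and conditionally set $X_i \sim P_0$ if $Z_i = 0$ and $X_i \sim P_\packval$ if $Z_i = 1$; then $X_{1:n} \sim P_{\theta_\packval}^n$, and for any two labels $\packval, \altpackval$ the coupled samples agree wherever $Z_i = 0$, so their Hamming distance is at most $K = \sum_i Z_i$. Let $E = \{K \le \ceil{np}\}$; since $\ceil{np}$ upper-bounds the median of $\mathrm{Bin}(n,p)$, we have $\P(E) \ge \half$. Iterating the $(\diffp, \delta)$-DP definition gives the group-privacy bound that, for samples at Hamming distance $\le k$ and every measurable $B \subseteq \Theta$,
\begin{equation*}
  \channel(B \mid x_{1:n}) \le e^{k\diffp}\,\channel(B \mid x'_{1:n}) + \delta\,\frac{e^{k\diffp} - 1}{e^\diffp - 1}.
\end{equation*}
Integrating this bound against the coupling on the event $E$ yields, for any $\packval, \altpackval \in \packset$ and any measurable $B$,
\begin{equation*}
  P_{\theta_\packval}(\what\theta \in B,\, Z \in E) \le e^{\diffp \ceil{np}}\,P_{\theta_\altpackval}(\what\theta \in B,\, Z \in E) + \delta\,\frac{e^{\diffp \ceil{np}} - 1}{e^\diffp - 1}\,\P(E).
\end{equation*}

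Next I would aggregate using disjoint ``correctness'' sets $\{A_\packval\}_{\packval \in \packset}$ with $\theta_\packval \in A_\packval$, chosen so that $\what\theta \notin A_\packval$ forces $\metric(\what\theta, \theta_\packval) \ge \separation{\packset}$ (a suitably refined Voronoi-type decomposition of $\{\theta_\packval\}$ handles the usual factor-of-two that appears in the triangle inequality). Disjointness gives $\sum_\packval P_{\theta_\altpackval}(\what\theta \in A_\packval, Z \in E) \le \P(E)$ for every $\altpackval$. Setting $S = \sum_\packval P_{\theta_\packval}(\what\theta \in A_\packval, Z \in E)$, applying the group-privacy bound with $B = A_\packval$ for each pair $(\packval, \altpackval)$, summing first over $\altpackval \ne \packval$ and then over $\packval$, and using disjointness produces a linear inequality of the form
\begin{equation*}
  [(|\packset|-1) + e^{\diffp \ceil{np}}]\,S \le |\packset|\,\P(E)\,\Bigl[e^{\diffp\ceil{np}} + (|\packset|-1)\,\delta\,\frac{e^{\diffp\ceil{np}}-1}{e^\diffp-1}\Bigr].
\end{equation*}
Solving this for $S/|\packset|$, converting to the ``far from the truth'' probability via $P_{\theta_\packval}(\what\theta \notin A_\packval) \ge \P(E) - P_{\theta_\packval}(\what\theta \in A_\packval, Z \in E)$, and invoking $\P(E) \ge \half$ yields the stated lower bound.

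The main obstacle is the algebraic bookkeeping after the double sum: the inequality in $S$ must be rearranged into precisely the ratio $(|\packset|-1)/[1 + (|\packset|-1) e^{-\diffp\ceil{np}}]$ in the theorem (obtained by dividing numerator and denominator by $e^{\diffp\ceil{np}}$), and the approximate-DP slack $\delta (e^{\diffp\ceil{np}}-1)/(e^\diffp-1)$ must be re-expressed as the subtractive term $\delta (1 - e^{-\diffp\ceil{np}})/(1 - e^{-\diffp})$. The conceptual moves---latent-indicator coupling, group privacy restricted to at most $\ceil{np}$ ``active'' coordinates, binomial-median concentration, and disjoint-set aggregation---are each routine once the coupling is in place; the delicate parts are choosing $A_\packval$ to force the full separation $\separation{\packset}$ (not merely $\separation{\packset}/2$) and keeping track of every occurrence of $\P(E)$ in the final ratio.
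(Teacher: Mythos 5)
Your proposal is correct and follows essentially the same route as the paper's proof: the shared Bernoulli-indicator coupling bounding the Hamming distance by a $\mathrm{Binomial}(n,p)$ count, the iterated (group) approximate-DP inequality restricted to the event that this count is at most $\ceil{np}$ (using the binomial median), and the disjoint-region aggregation yielding a linear inequality in the average success probability --- your version even yields a marginally tighter coefficient on the $\delta$ slack, which still implies the stated bound. You also correctly flag the factor-of-two subtlety in making the sets $A_\packval$ disjoint while forcing the full separation $\separation{\packset}$, a point the paper's own proof glosses over by asserting disjointness of the radius-$\separation{\packset}$ balls.
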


In the remainder of this section, we illustrate the consequences of this
result via two examples, the first on mean estimation and the second on
non-parametric density estimation. Roughly, we show that with appropriate
choice of the mixture parameter $p$,
Theorem~\ref{theorem:population-packing} implies it is difficult to
distinguish between the distributions $(1 - p) P_0 + p P_\packval$ and $(1 -
p) P_0 + p P_{\altpackval}$, when $\packval \neq \altpackval$, as long as
the packing set $\packset$ is large enough. In particular applications, we
show how this implies substantial dependence on the ambient dimension $d$ of
the parameter space.

\subsubsection{Mean estimation under (approximate) differential
  privacy}

For our first example, we provide a lower bound on convergence rates for
mean estimation problems.  We begin by recalling the
definition~\eqref{eqn:moment-distributions} of the set $\mc{P}_\moment^d(r)$
as distributions $P$ satisfying the non-central moment condition
$\E[\ltwo{X}^\moment] \le r^k$ (a similar result also holds under a central
moment condition).  We have the following result.
\begin{proposition}
  \label{proposition:diffp-mean-estimation}
  Let $\mc{Q}_{\diffp,\delta}$ denote the family of $(\diffp,
  \delta)$-approximately differentially private channels. Then
  for the mean estimation problem,
  \begin{equation}
    \label{eqn:moments-kill}
    \minimax_n\left(\theta(\mc{P}_\moment^d(r)), \mc{Q}_{\diffp,\delta},
    \ltwo{\cdot}^2\right)
    \gtrsim \frac{r^2}{n}
    + r^2 \min\bigg\{\bigg(\frac{d^2 \wedge \log^2 \frac{1}{\delta}}{
      n^2 \diffp^2}\bigg)^{\frac{\moment - 1}{\moment}}, 1 \bigg\}.
  \end{equation}
\end{proposition}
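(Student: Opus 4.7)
My plan is to combine the classical $r^2/n$ non-private lower bound (obtainable by a standard two-point Le Cam argument on distributions supported on $\{-r,0,r\}$, which holds even without privacy) with a packing-based application of Theorem~\ref{theorem:population-packing} for the privacy-dependent term. All the real work is in the second term.

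The key is to construct an appropriate packing. I would take $\mathcal{V} \subset \R^d$ to be a set of unit vectors built from a Gilbert-Varshamov hypercube code: pick $u \in \{\pm 1\}^d$ with pairwise Hamming distance at least $d/4$ and set $v = u/\sqrt{d}$, giving $|\mathcal{V}| \ge e^{cd}$ vectors with $\ltwo{v - v'} \ge 1$. Then I let $P_0 = \delta_0$ (point mass at the origin) and $P_\packval = \delta_{\rho v}$ where $\rho = r p^{-1/\moment}$, so that the mixture $P_{\theta_v} := (1-p) P_0 + p P_v$ has mean $\theta_\packval = r p^{1 - 1/\moment} v$, $\moment$th moment exactly $p\rho^\moment = r^\moment$, and hence lies in $\mc{P}_\moment^d(r)$. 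The separation is $\separation{\mc{V}} \ge r p^{1-1/\moment}$.

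Now I apply Theorem~\ref{theorem:population-packing} with $M = |\mathcal{V}|$ and $q = e^{-\diffp \lceil np \rceil}$. The conclusion is that the average error probability is at least
\[
\frac{(M-1)\!\left(\tfrac{1}{2} q - \delta \frac{1 - q}{1 - e^{-\diffp}}\right)}{1 + (M-1) q}.
\]
I would tune $p$ so that $Mq$ is of constant order (forcing $\diffp n p \asymp \log M$) while simultaneously ensuring the $\delta$-correction is dominated by $q/2$ (which requires $\delta/(1 - e^{-\diffp}) \lesssim q$). If $\delta$ is small enough that both conditions are compatible with the full packing $M \asymp e^{cd}$, I can take $np \asymp d/\diffp$. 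Otherwise the $\delta$-term forces me to shrink to a sub-packing of size $M \asymp 1/\delta$, yielding $np \asymp \log(1/\delta)/\diffp$. In both regimes the numerator stays a positive constant, and the better choice gives $p \asymp \min(d, \log(1/\delta))/(n\diffp)$.

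Finally, Markov's inequality turns this into a risk bound: the minimax risk is at least a constant times $\separation{\mc{V}}^2 \asymp r^2 p^{2(\moment - 1)/\moment}$, and substituting the chosen $p$ produces the stated $r^2 (d^2 \wedge \log^2(1/\delta))^{(\moment-1)/\moment} / (n\diffp)^{2(\moment - 1)/\moment}$. In the regime $\min(d, \log(1/\delta)) \ge n\diffp$ the analysis would require $p \ge 1$; there I just truncate to $p = 1$, which yields a trivial lower bound of order $r^2$ and accounts for the outer $\min$ with $1$. The main obstacle is the bookkeeping around the $\delta$-correction in Theorem~\ref{theorem:population-packing}: the constraint that this correction not swamp the $q/2$ main term is precisely what caps the useful packing size at $1/\delta$ and introduces the $\log(1/\delta)$ factor, so this balance needs to be done carefully to get both sides of the $d^2 \wedge \log^2(1/\delta)$ simultaneously.
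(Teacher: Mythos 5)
Your proposal is correct and follows essentially the same route as the paper: the same point-mass mixture $(1-p)\delta_0 + p\,\delta_{rp^{-1/\moment}v}$ over an exponentially large packing of unit vectors, the same application of Theorem~\ref{theorem:population-packing}, and the same choice $p \asymp \min(d,\log(1/\delta))/(n\diffp)$ with truncation at $p=1$. The only cosmetic differences are that the paper uses a volume-based packing rather than a Gilbert--Varshamov code, and it keeps the full $2^d$-point packing while letting the smaller $p$ absorb the $\delta$-constraint rather than passing to a sub-packing of size $1/\delta$; both yield the identical bound.
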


It is interesting to study the scaling---the relationships between dimension
$d$, privacy parameter $\diffp$, and sample size $n$---that
Proposition~\ref{proposition:diffp-mean-estimation} characterizes. Focusing
on the $\diffp$-differentially private case with \emph{bounded} random
variables (i.e.~$\moment = +\infty$) we see that the minimax lower bound
scales as $r^2 d^2 / (n^2 \diffp^2)$. In particular, the standard
(non-private) rate of convergence in squared $\ell_2$-error scales as $r^2 /
n$, as discussed preceding Proposition~\ref{proposition:minimax-lower-tv}.
This radius term $r$ implicitly encodes some dimension dependence (consider,
for example, a normal $\normal(0, \stddev^2 I_{d \times d})$ random
variable, which satisfies $\E[\ltwo{X}^2] = d \stddev^2$), so we see that
there is substantial \emph{additional} dimension dependence: to attain the
classical (non-private) rate of convergence, we must have $n$ scaling at
least as large as $n \ge d^2 / \diffp^2$. In general, for fixed $\diffp,
\delta$, we see that the lower bound in \eqref{eqn:moments-kill} scales as
$\frac{r^2}{n}$ if and only if
\begin{equation}
  \label{eqn:n-d-dep}
  n \ge \bigg(\frac{d^2 \wedge \log^2 \frac{1}{\delta}}{\diffp^2}\bigg)^{
    \frac{\moment - 1}{\moment - 2}}.
\end{equation}
We usually think of $\delta$ as decreasing quite quickly with $n$---as a
simple example, as $\delta = e^{-\sqrt{n}}$ with $\diffp \le 1$---so that
the sample complexity bound~\eqref{eqn:n-d-dep} implies the optimal
statistically efficient rate is possible only if $n \ge (d^2 /
\diffp^2)^{\frac{\moment - 1}{\moment - 2}}$.  Thus, at least for suitably
quickly decreasing $\delta$, we observe a quadratic-like penalty in
convergence rate from the dimension.

\begin{proof}[Proof of Proposition~\ref{proposition:diffp-mean-estimation}]
  Let $P_0$ be a point mass
  supported on $\{X = 0\}$, and for fixed $p \in [0, 1]$ let $P_\packval$
  be a point mass supported on $\{X = p^{-1/\moment} r \packval\}$, where
  $\ltwo{\packval} = 1$. For any such $p \in [0, 1]$, we claim that the
  mixture $P_{\theta_\packval} \defeq (1 - p) P_0 + p P_\packval$ lies in
  $\mc{P}_\moment^d(r)$. Indeed, we have $\theta(P_{\theta_\packval}) = \E_{P_{\theta_\packval}}[X] = p^{1 -
    1/\moment} r^k \packval$, and 
  \begin{align*}
    \E_{P_{\theta_\packval}}[\ltwo{X}^k]
    & = p \ltwo{p^{-1/\moment} r \packval}^\moment
    = p p^{-1} r^\moment
    = r^\moment.
  \end{align*}

  Applying a standard volume-based result on the construction of packing
  sets (e.g.~\cite{Ball97}), there exists a set $\packset\subset\R^d$ of
  cardinality at least $|\packset|\geq 2^d$, with $\norm{\packval}_2\leq1$
  for all $\packval\in\packset$ and with
  $\norm{\packval-\packval'}_2\geq\frac{1}{2}$ for all
  $\packval\neq\packval'\in\packset$.  Because $\theta(P_{\theta_\packval})
  = p^{1-1/\moment} r \packval$, we have separation
  \begin{equation*}
    \separation{\packset} \ge r
    p^{1-1/\moment} \min_{\packval \neq \altpackval} \ltwo{\packval -
      \altpackval} \ge (r/2) p^{1 - 1/\moment}.
  \end{equation*}

  Now, we apply the reduction of estimation to testing with this packing
  $\packset$,
  which implies
  \begin{align*}
    \sup_{P \in \mc{P}_\moment^d(r)}
    \E_P\left[\ltwos{\what{\theta} - \theta(P)}^2\right]
    & \ge \frac{1}{|\packset|} \sum_{\packval \in \packset}
    \E_{P_{\theta_\packval}}\left[
      \ltwos{\what{\theta} - \theta_\packval}^2\right]
    \ge \separation{\packset}^2
    \frac{1}{|\packset|} \sum_{\packval \in \packset}
    P_{\theta_\packval}\left(\metric(\what{\theta}, \theta_\packval) \ge
    \separation{\packset}\right).
  \end{align*}
  Using Theorem~\ref{theorem:population-packing},
  we thus obtain
  \begin{align*}
    \frac{1}{|\packset|} \sum_{\packval \in \packset}
    \E_{P_{\theta_\packval}}\left[\ltwos{\what{\theta} - \theta_\packval}^2
      \right]
    & \ge \left(\frac{r p^{1 - 1/\moment}}{2}\right)^2
    \frac{(|\packset| - 1) \cdot (\half e^{-\diffp \ceil{n p}} -
      \frac{\delta}{1 - e^{-\diffp}})}{
      1 + (|\packset| - 1) e^{-\diffp \ceil{np}}} \\
    & = \frac{r^2 p^{2 - 2/\moment}}{4} \cdot
    \frac{(2^d - 1) \cdot (\half e^{-\diffp \ceil{n p}} -
      \frac{\delta}{1 - e^{-\diffp}})}{
      1 + (2^d - 1) e^{-\diffp \ceil{np}}}.
  \end{align*}

  We now choose $p$ to (approximately) maximize the preceding display, which
  makes the average probability of error constant. Without loss of
  generality, we may assume that $d \ge 2$ (as
  Proposition~\ref{proposition:minimax-lower-tv} gives the result when $d =
  1$), so that $2^d - 1 \ge e^{d/2}$. We choose
  \begin{equation}
    \label{eqn:define-p}
    p = \frac{1}{n\diffp} \min\left\{
    \frac{d}{2} - \diffp,
    \log\left(\frac{1-e^{-\diffp}}{4\delta e^\diffp}\right)\right\}.
  \end{equation}
  The second term in the minimum~\eqref{eqn:define-p} is sufficiently
  small that
  \begin{equation*}
    \half e^{-\diffp \ceil{np}}
    - \frac{\delta}{1-e^{-\diffp}}
    \ge
    \frac{1}{4} e^{-\diffp (np + 1)} > 0,
  \end{equation*}
  and so we have
  \begin{equation*}
    \sup_{P \in \mc{P}_\moment^d(r)}
    \E_P\left[\ltwos{\what{\theta} - \theta(P)}^2\right]
    \ge
    \frac{r^2 p^{2 - 2/\moment}}{4}
    \cdot \frac{\frac{1}{4} e^{d/2} e^{-\diffp (np+1)}}{
      1+ e^{d/2} e^{-\diffp (np + 1)}}
    \ge \frac{r^2 p^{2 - 2/\moment}}{4} \cdot \frac{1}{8},
  \end{equation*}
  where we have used that the first term in the minimum~\eqref{eqn:define-p}
  implies that $e^{d/2} e^{-\diffp (np + 1)} \ge 1$.  For the
  result~\eqref{eqn:moments-kill}, substitute the value~\eqref{eqn:define-p}
  in the preceding display.
\end{proof}

\subsubsection{Nonparametric density estimation under differential privacy}

\newcommand{\pclass}{\mc{P}}
\newcommand{\numbin}{k}

We now turn to a non-parametric problem, showing how
Theorem~\ref{theorem:population-packing} provides lower bounds in this case
as well. Interestingly, we again obtain a result that suggests a penalty
from privacy that scales quadratically in (an analogue of) the dimension and
inversely in $n^2 \diffp^2$, as we saw in the preceding example. In this
case, however, the dimension is implictly chosen to make the problem
challenging. Formally, let $\pclass$ denote the
family of distributions supported on $[0, 1]^d$ with densities $f$ that are
$1$-Lipschitz continuous (with respect to the $\ell_2$-norm on $\R^d$).  We
assume we observe $X_1, \ldots, X_n \simiid P$, and we wish to estimate the
density $f$ from which the sample is drawn; we use $\ell_2^2$ error as our
loss, so that given an estimate $\what{f}$, we measure error via
$\ltwos{\what{f} - f}^2 = \int_0^1 (f(t) - \what{f}(t))^2 dt$.

In this case, we obtain the following result; we prove the result
only in the case of $\diffp$-differentially private channels ($\delta = 0$)
for simplicity. (See Section~\ref{sec:proof-density-lower-bound} for
the proof.)
\begin{proposition}
  \label{proposition:density-lower-bound}
  Let $\channels_\diffp$ denote the family of $\diffp$-differentially
  private channels. Then for a constant $c_d > 0$ that may depend on the
  dimension $d$,
  \begin{equation}
    \label{eqn:density-lower-bound}
    \minimax_n(\pclass, \channels_\diffp, \ltwo{\cdot}^2)
    \ge c_d \left[\frac{1}{n^{\frac{2}{2 + d}}}
    + \frac{1}{(n \diffp)^{\frac{2}{1 + d}}}\right].
  \end{equation}
\end{proposition}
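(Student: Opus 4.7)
The bound decomposes into two parts, and I would prove each separately and then combine.

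\textbf{Nonprivate term.} The rate $n^{-2/(2+d)}$ is the classical minimax rate for estimating a $1$-Lipschitz density in squared $L^2$ loss (e.g., via Assouad's lemma applied to a bin-and-bump construction with bin width $h \asymp n^{-1/(2+d)}$). Since $\channels_\diffp$ is a subset of all (randomized) estimators, the non-private lower bound carries over unchanged.

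\textbf{Private term via Theorem~\ref{theorem:population-packing}.} I would build a packing of Lipschitz densities out of bump perturbations of the uniform distribution. Let $\psi : \R^d \to \R$ be a fixed smooth bump supported in $[0,1]^d$ with $\int \psi = 0$, $\norm{\psi}_\infty \le 1$, and Lipschitz constant $L$. Partition $[0,1]^d$ into $k = h^{-d}$ cubes $B_1,\dots,B_k$ of side $h$, and set $\phi_j(x) = \alpha\, \psi((x - c_j)/h)\,\indicbs{x \in B_j}$ for the center $c_j$ of $B_j$, with amplitude $\alpha \le h/L$ so that each $\phi_j$ is $1$-Lipschitz and has $\ltwos{\phi_j}^2 \asymp \alpha^2 h^d$. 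By the Varshamov--Gilbert lemma, choose $\packset \subset \{-1,+1\}^k$ with $|\packset| \ge 2^{k/8}$ and pairwise Hamming distance at least $k/8$. Then define $P_0$ to be uniform on $[0,1]^d$, and for each $\packval \in \packset$ let $P_\packval$ be the distribution with density $g_\packval = 1 + \frac{1}{p}\sum_j \packval_j \phi_j$. Provided $\alpha \le p$ we have $g_\packval \ge 0$, so $g_\packval$ is a valid density, and the mixture $P_{\theta_\packval} = (1-p)P_0 + p P_\packval$ has density $f_\packval = 1 + \sum_j \packval_j \phi_j$, which is $1$-Lipschitz and therefore lies in $\pclass$.

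\textbf{Separation and tuning.} The pairwise Hamming distance lower bound yields
\begin{equation*}
\ltwos{f_\packval - f_{\altpackval}}^2 \;=\; 4 \sum_j \indicb{\packval_j \neq \altpackval_j}\ltwos{\phi_j}^2 \;\gtrsim\; k \alpha^2 h^d \;\asymp\; \alpha^2,
\end{equation*}
so $\separation{\packset} \gtrsim \alpha$. Applying Theorem~\ref{theorem:population-packing} with $\delta = 0$ and any $(\diffp,0)$-DP channel gives an average error probability of at least
$$\frac{(|\packset|-1)\tfrac{1}{2}e^{-\diffp\lceil np\rceil}}{1 + (|\packset|-1)e^{-\diffp\lceil np\rceil}},$$
which is a constant whenever $\diffp np \lesssim \log|\packset| \asymp h^{-d}$, i.e.\ $np\diffp \le c\, h^{-d}$. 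Combining the standard estimation-to-testing reduction~\eqref{eqn:estimation-to-testing} with this packing yields
$\minimax_n(\pclass, \channels_\diffp, \ltwos{\cdot}^2) \gtrsim \alpha^2$
whenever $\alpha \le h/L$, $\alpha \le p$, and $np\diffp \le c\, h^{-d}$. To maximize $\alpha^2$, the best tradeoff is $\alpha \asymp p \asymp h$, which under the constraint $nh\diffp \asymp h^{-d}$ forces $h \asymp (n\diffp)^{-1/(1+d)}$ and gives the desired $(n\diffp)^{-2/(1+d)}$ bound.

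\textbf{Main obstacle.} The delicate point is balancing three competing requirements on the bump amplitude $\alpha$: it must be small enough to keep $f_\packval$ Lipschitz ($\alpha \lesssim h$), small enough that $g_\packval \ge 0$ so that $P_{\theta_\packval}$ can be genuinely written as a mixture with the uniform base measure ($\alpha \le p$), yet large enough that the separation $\separation{\packset} \asymp \alpha$ matches the target rate. The constraint $\alpha \le p$ from the mixture representation is what couples $h$ and $p$ and ultimately produces the $2/(1+d)$ exponent rather than the classical $2/(2+d)$; everything else is essentially bookkeeping once Theorem~\ref{theorem:population-packing} is in hand.
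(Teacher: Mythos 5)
Your proposal is correct and follows essentially the same route as the paper: a mixture construction $(1-p)P_0 + pP_\packval$ with $P_\packval$ having density $1 + \frac{1}{p}(f_\packval - 1)$, perturbation amplitude tied to $p$ for nonnegativity, an exponentially large packing of $1$-Lipschitz densities with $\log|\packset| \asymp h^{-d}$, and Theorem~\ref{theorem:population-packing} with $\delta = 0$ followed by the tuning $p \asymp (n\diffp)^{-1/(d+1)}$. The only cosmetic difference is that you build the packing explicitly via Varshamov--Gilbert and disjoint bumps, where the paper cites the standard approximation-theoretic packing bound~\eqref{eqn:loads-of-densities}.
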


The bound~\eqref{eqn:density-lower-bound} is matched by known
upper bounds. The $n^{-2/(2 + d)}$ term in the
bound is the well-known minimax rate for estimation of a Lipschitz density
on $[0, 1]^d$; a standard histogram estimator achieves this convergence rate
(see, for example, \citet{YangBa99} or \citet{Tsybakov09}). To attain the
latter part of the lower bound, we recall
\citet[Theorem 4.4]{WassermanZh10}. Making the immediate extension of their
results to $d$ dimensions, we note that \citeauthor{WassermanZh10} show that
constructing a standard histogram estimator with $\numbin$ equally sized
bins on $[0, 1]^d$, then adding independent Laplace noise (of appropriate
magnitude dependent on $\diffp$ and $\numbin$) to each of the bins, and
returning this histogram, gives an estimator $\what{f}_{\rm hist}$ that is
$\diffp$-differentially private and satisfies
\begin{equation}
  \E\left[\ltwos{\what{f}_{\rm hist} - f}^2\right]
  \lesssim \frac{\numbin}{n} + \frac{1}{\numbin^\frac{2}{d}}
  + \frac{\numbin^2}{n^2 \diffp^2}.
  \label{eqn:histogram-upper-bound}
\end{equation}
The first two terms are the standard bias-variance tradeoff in density
estimation (e.g.~\cite[Chapter 3]{Tsybakov09,DevroyeGy85}), while the last
$\numbin^2 / n^2 \diffp^2$ term is reminiscent of the
bounds~\eqref{eqn:moments-kill} in its additional quadratic
penalty. Choosing $\numbin = \min\{n^{1/(d+2)}, (n \diffp)^{1/(d+1)}\}$ in
expression~\eqref{eqn:histogram-upper-bound} gives the
bound~\eqref{eqn:density-lower-bound}.

We make one more remark on
Proposition~\ref{proposition:density-lower-bound}. Though our observations
$X_i$ are bounded, as are the densities we estimate, we may not take the
privacy parameter $\diffp$ to 0 as quickly as in the parametric
problems in the preceding section. Indeed, if $\diffp =
o(n^{-\frac{1}{2 + d}})$ as $n \to \infty$,
expression~\eqref{eqn:density-lower-bound} shows it is impossible to attain
the non-private rate. In contrast, in expression~\eqref{eqn:moments-kill},
we see that (assuming $\moment = +\infty$) as long as $\diffp
\gg n^{-1/2}$, as $n \to \infty$ we attain the classical
parametric rate.

\section{A few upper bounds for mean estimation}
\label{sec:upper-bounds}

\renewcommand{\truncate}[2]{\pi_{#2}\left({#1}\right)}
\renewcommand{\truncatebigg}[2]{\pi_{#2}\bigg({#1}\bigg)}

Having provided lower bounds on several (population) estimation problems, we
now focus on guarantees for convergence. We focus for concreteness on mean
estimation tasks under the moment
assumption~\eqref{eqn:moment-distributions}, as we wish to most simply
illustrate a few of the consequences of imposing privacy on estimators of
population quantities.
Our goal is thus to estimate the mean $\theta = \E[\statrv]$ of a
distribution $P \in \mc{P}_\moment^d(r)$, that is, distributions supported
on $\R^d$ satisfying the moment condition $\E[\ltwo{\statrv}^\moment] \le
r^\moment$ for some $k\geq 2$.

We first define our estimator, which is similar to estimators in some of our
earlier work~\cite{DuchiJoWa13_focs}. For $v \in \R^d$, let
$\truncate{v}{T}$ denote the projection of the vector $v$ onto the
$\ell_2$-ball of radius $T$. Now, let $W \in \R^d$ be a random vector (whose
distribution we specify presently); our private estimator is
\begin{equation}
  \label{eqn:truncated-mean-estimator}
  \what{\theta} \defeq \frac{1}{n} \sum_{i=1}^n \truncate{X_i}{T} + W.
\end{equation}
The estimator~\eqref{eqn:truncated-mean-estimator} is a type of robustified
estimator of location where outlying estimates are truncated to be within a
ball of radius $T$; similar ideas for estimation of parameters have been used
by Smith~\cite{Smith11} and are frequent in robust statistical
estimation~\cite{Huber81}.
By specific choices of $W$ and $T$, however, we can achieve order optimal
rates of convergence for our private estimators.

We consider three distributions for $W$
that variously satisfy our privacy definitions. 
Before giving them, we note that if we define $v = \frac{1}{n} \sum_{i=1}^n
\truncate{x_i}{T}$ and $v' = \frac{1}{n} \sum_{i=1}^n \truncate{x_i'}{T}$,
then it is clear that
\begin{equation}
  \label{eqn:truncated-means-close}
  \ltwo{v - v'}
  \le \frac{1}{n} \sum_{i=1}^n \ltwo{\truncate{x_i}{T}
    - \truncate{x_i'}{T}}
  \le \frac{1}{n}
  \sum_{i=1}^n \ltwo{x_i - x_i'} \wedge 2T
  \le \frac{2T}{n} \card\left\{i : x_i \neq x_i'\right\}.
\end{equation}
\paragraph{A mechanism for KL-divergence-based privacy}
\newcommand{\diffpkl}{\diffp_{\mathsf{KL}}} Let us first consider the
divergence-based variants of privacy, focusing on $\diffp$-KL
privacy~\eqref{eqn:kl-private}. In this case, take $W \sim \normal(0,
\frac{2 T^2}{n^2 \diffpkl} I_{d \times d})$.  Letting $\channel$ denote the
distribution of $\what{\theta}$, for samples $x_{1:n}$ and $x_{1:n}'$
differing in at most a single observation we have
\begin{align*}
  \dkl{\channelprob(\cdot \!\mid \statsample_{1:n})}{
    \channelprob(\cdot \!\mid \statsample_{1:n}')}
  & = \dkl{\normal\Big(v, \frac{2 T^2}{n^2 \diffpkl}\Big)}{
    \normal\Big(v', \frac{2 T^2}{n^2 \diffpkl}\Big)} \\
  & = \frac{n^2 \diffpkl}{4 T^2} \ltwo{v - v'}^2
  \le \frac{n^2 \diffpkl}{4T^2} \frac{4T^2}{n^2} = \diffpkl,
\end{align*}
because $\ltwo{v - v'} \le 2 T/n$. Therefore, this estimator achieves
KL-privacy as desired.

\paragraph{A mechanism for approximate differential privacy}
Turning now to the variants of differential privacy, we note that the
Hamming-Lipschitz guarantee~\eqref{eqn:truncated-means-close} implies that
if we take $W \sim \normal(0, \frac{2 T^2 \log \frac{1}{\delta}}{n^2
  \diffp^2} I_{d \times d})$, then the estimator $\what{\theta}$ is
$(\diffp, \delta)$-approximately differentially private (see, for example,
\citet{DworkKeMcMiNa06} or \citet[Section 1.3.2]{Hall13}).

\paragraph{A mechanism for smooth differential privacy}
Finally, we show how to satisfy the strongest variant of privacy, smooth
differential privacy (Definition~\ref{definition:smooth-privacy}).
In particular, using the metric $\privmetric(x, x') =
\ltwo{x - x'} \wedge 2T$ and $\dpriv(x_{1:n}, x_{1:n}')
= \frac{1}{2T} \sum_{i=1}^n \privmetric(x_i, x_i')$, we claim
that taking $W$ to have independent coordinates, each Laplace
distributed with density $p(w) \propto \exp(-\kappa|w|)$, where
$\kappa = \diffp n / 2 T \sqrt{d}$, satisfies smooth differential
privacy. Indeed, we have that the ratio of the densities
\begin{align*}
  \left|\log \frac{\channeldens(z \mid x_{1:n})}{
    \channeldens(z \mid x_{1:n}')}\right|
  & = \left|\frac{\diffp n}{2 T \sqrt{d}} \lone{v - z}
  - \frac{\diffp n}{2 T \sqrt{d}} \lone{v' - z}\right| \\
  & \le \frac{\diffp n}{2 T} \ltwo{v - v'}
  \le \frac{\diffp n}{n} \frac{1}{2 T} \sum_{i=1}^n \ltwo{x_i - x_i'}
  \wedge 2T
  = \diffp \cdot \dpriv(x_{1:n}, x_{1:n}'),
\end{align*}
where the final inequality uses the bound~\eqref{eqn:truncated-means-close}.
In particular, this additive Laplace noise mechanism
satisfies smooth differential privacy and, by extension, differential
privacy.

With these three mechanisms in place, we have the following proposition,
whose proof we provide in Section~\ref{sec:proof-minimax-upper}.
\begin{proposition}
  \label{proposition:minimax-upper}
  Consider the estimator~\eqref{eqn:truncated-mean-estimator}. The
  following hold.
  \begin{enumerate}[(i)]
  \item Choose $T = r (n^2 \diffpkl / d)^{1 / (2k)}$ and let
    $W \sim \normal(0, \frac{2 T^2}{n^2 \diffpkl} I_{d \times d})$. Then
    $\what{\theta}$ is $\diffpkl$-KL private, and
    \begin{equation*}
      \E[\ltwos{\what{\theta} - \E[X]}^2] \lesssim \frac{r^2}{n}
      + r^2 \left(\frac{d}{n^2 \diffpkl}\right)^{\frac{k - 1}{k}}.
    \end{equation*}
  \item Choose $T = r(n^2 \diffp^2 / (d \log \frac{1}{\delta}))^{1 / (2k)}$
    and let $W \sim \normal(0, \frac{2 T^2 \log \frac{1}{\delta}}{n^2
      \diffp^2} I_{d \times d})$. Then $\what{\theta}$ is $(\diffp,
    \delta)$-approximately differentially private, and
    \begin{equation*}
      \E[\ltwos{\what{\theta} - \E[X]}^2] \lesssim \frac{r^2}{n}
      +  r^2 \left(\frac{d \log \frac{1}{\delta}}{
        n^2 \diffp^2}\right)^{\frac{k - 1}{k}}.
    \end{equation*}
  \item Choose $T = r(n \diffp / d)^{1 / k}$ and let $W$ have independent
    $\laplace(\diffp n / (2 T \sqrt{d}))$-distributed coordinates.
    Then $\what{\theta}$ is $\diffp$-differentially private
    and $(\privmetric, \diffp)$-smoothly differentially private
    (Def.~\ref{definition:smooth-privacy}) with
    metric $\privmetric(x, x') = \ltwo{x - x'} \wedge 2 T$,
    and
    \begin{equation*}
      \E[\ltwos{\what{\theta} - \E[X]}^2] \lesssim \frac{r^2}{n}
      +  r^2 \left(\frac{d^2}{
        n^2 \diffp^2}\right)^{\frac{k - 1}{k}}.
    \end{equation*}
  \end{enumerate}
\end{proposition}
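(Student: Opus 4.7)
Since the three paragraphs preceding the proposition already verify that each noise distribution $W$ yields the required privacy guarantee, the only thing left is the risk bound, and the plan is to establish it simultaneously in all three cases by the same bias--variance decomposition. Specifically, I would write
\[
\what{\theta} - \E[X] = \bigg(\frac{1}{n}\sum_{i=1}^n \truncate{X_i}{T} - \E[\truncate{X}{T}]\bigg) + \big(\E[\truncate{X}{T}] - \E[X]\big) + W,
\]
and, using that $W$ is independent of the $X_i$'s and mean zero, decouple $\E[\ltwos{\what{\theta}-\E[X]}^2]$ into a variance term, a squared-bias term, and $\E[\ltwo{W}^2]$.

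For the variance term I would use $\ltwo{\truncate{X}{T}} \le \ltwo{X}$ and independence of the $X_i$ to get
\[
\E\bigg[\ltwos{\tfrac{1}{n}\sum_i \truncate{X_i}{T} - \E[\truncate{X}{T}]}^2\bigg] \le \frac{\E[\ltwo{\truncate{X}{T}}^2]}{n} \le \frac{\E[\ltwo{X}^2]}{n} \le \frac{r^2}{n},
\]
where the last step uses $k\ge 2$ together with Jensen. For the truncation bias I would note that $\truncate{X}{T} - X$ vanishes on $\{\ltwo{X}\le T\}$ and has norm at most $\ltwo{X}$ otherwise, so Jensen, Hölder with exponents $(k, k/(k-1))$, and Markov combine to give
\[
\ltwo{\E[\truncate{X}{T}] - \E[X]} \le \E\big[\ltwo{X}\indicbs{\ltwo{X}>T}\big] \le \E[\ltwo{X}^k]^{1/k}\,\P(\ltwo{X}>T)^{(k-1)/k} \le \frac{r^k}{T^{k-1}},
\]
so the squared bias is at most $r^{2k}/T^{2k-2}$. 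The noise variance $\E[\ltwo{W}^2]$ is then just a direct calculation from the specified distribution: $d$ times the per-coordinate variance in the two Gaussian cases (i) and (ii), and $2d/\kappa^2 = 8 d^2 T^2/(n^2\diffp^2)$ in the Laplace case (iii).

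All that remains is to plug the three prescribed choices of $T$ into $r^{2k}/T^{2k-2} + \E[\ltwo{W}^2]$. In each case $T$ has been selected precisely so that the squared bias equals the noise variance up to constants, and in each case the common value works out to $r^2 c^{(k-1)/k}$ where $c$ is, respectively, $d/(n^2\diffpkl)$, $(d\log\tfrac{1}{\delta})/(n^2\diffp^2)$, and $d^2/(n^2\diffp^2)$, which reproduces the three stated bounds after absorbing the $r^2/n$ variance term. The step I expect to require the most care is case (iii): the Laplace scale $\kappa = \diffp n/(2T\sqrt{d})$ produces an $\E[\ltwo{W}^2]$ of order $d^2 T^2/(n^2 \diffp^2)$ rather than $d T^2/(n^2 \diffp^2)$, and this extra factor of $d$ is exactly what distinguishes the smooth-DP rate from the Gaussian cases; the rest of the argument is a routine balancing of terms.
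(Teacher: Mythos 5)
Your proposal is correct and follows essentially the same route as the paper's proof: the identical bias--variance--noise decomposition, the same $O(r^{2k}/T^{2k-2})$ truncation-bias bound and $r^2/n$ variance bound, and the same balancing of the bias against $\E[\ltwo{W}^2]$ with the prescribed $T$ (including the key observation that the Laplace mechanism contributes $\E[\ltwo{W}^2] \asymp d^2T^2/(n^2\diffp^2)$, the source of the extra dimension factor). The only differences are cosmetic: where the paper invokes non-expansiveness of the projection (its Lemma~\ref{lemma:proj-variance}) to control the variance term, you use the cruder but sufficient bound $\E[\ltwos{\truncate{X}{T}-\E[\truncate{X}{T}]}^2] \le \E[\ltwo{\truncate{X}{T}}^2] \le \E[\ltwo{X}^2] \le r^2$, which is valid because the projection is onto a ball centered at the origin, and where the paper bounds the bias via the tail-integral formula plus Markov (its Lemma~\ref{lemma:proj-mean}), you use H\"older plus Markov; both yield the same rates up to constants.
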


Proposition~\ref{proposition:minimax-upper} shows that many of the lower
bounds we have provided on population estimators in
Section~\ref{sec:lower-bounds} are tight. We summarize each of the
convergence guarantees in Table~\ref{table:upper-and-lower-bounds}, which
shows upper and lower bounds on estimation of a population mean that we have
derived. (Note that by Pinsker's inequality~\cite{CoverTh06}, $2\tvnorm{P_0
  - P_1}^2 \le \dkl{P_0}{P_1}$, so that lower bounds for $\diffp$-total
variation privacy imply lower bounds for $\sqrt{\diffpkl}$-KL privacy, and
convergence guarantees for $\diffpkl$-KL private estimators give convergence
guarantees for $\diffp^2$-TV private estimation.)  While our
bounds for $\diffpkl$-KL and $\diffp$-TV private estimators are not sharp---we
are missing a factor of the dimension $d$ between upper and lower
bounds---we see that divergence-based privacy allows substantially better
convergence guarantees as a function of the dimension as compared with
differential privacy. However, it does not permit better scaling with the
moments $\moment$ of the problem; all privacy guarantees suffer as the
number $\moment$ of moments available shrinks.  Moreover,
Proposition~\ref{proposition:minimax-upper}, when coupled with the lower
bounds provided by Proposition~\ref{proposition:diffp-mean-estimation},
shows that there is (essentially) no difference in estimation rates between
smooth differential privacy and differential privacy. In a sense, it is
possible to provide even stronger guarantees than differential privacy
without suffering in performance.

\begin{table}[t]
  \begin{center}
    \begin{tabular}{|c||c|c|}
      \hline
      Privacy type & Upper bound & Lower bound \\ \hline \hline
      No privacy constraint
      & $\displaystyle{\frac{1}{n}}$
      & $\displaystyle{ \frac{1}{n}}$ \\ \hline
      $\diffp$-differential (Def.~\ref{definition:differential-privacy})
      & $\displaystyle{\left(\frac{d^2}{n^2 \diffp^2}\right)^{
          \frac{\moment - 1}{\moment}} + \frac{1}{n}}$
      & $\displaystyle{\left(\frac{d^2}{n^2 \diffp^2}\right)^{
          \frac{\moment - 1}{\moment}} + \frac{1}{n}}$ \\ \hline
      $(\ltwo{\cdot}, \diffp)$-smooth (Def.~\ref{definition:smooth-privacy})
      & $\displaystyle{\left(\frac{d^2}{n^2 \diffp^2}\right)^{
          \frac{\moment - 1}{\moment}} + \frac{1}{n}}$
      & $\displaystyle{\left(\frac{d^2}{n^2 \diffp^2}\right)^{
          \frac{\moment - 1}{\moment}} + \frac{1}{n}}$ \\ \hline
      $(\diffp, \delta)$-approximate
      (Def.~\ref{definition:approximate-dp})
      & $\displaystyle{\left(\frac{d \log\frac{1}{\delta}}{n^2 \diffp^2}
        \right)^{\frac{\moment - 1}{\moment}} + \frac{1}{n}}$ &
      $\displaystyle{\left(\frac{d^2 \wedge \log^2\frac{1}{\delta}}{
          n^2 \diffp^2}\right)^{\frac{\moment - 1}{\moment}}
        + \frac{1}{n}}$ \\ \hline
      $\diffp$-TV (Def.~\ref{definition:divergence-privacy})
      & $\displaystyle{\left(\frac{d}{n^2 \diffp^2}\right)^{
          \frac{\moment - 1}{\moment}} + \frac{1}{n}}$
      & $\displaystyle{\left(\frac{1}{n^2 \diffp^2}\right)^{\frac{\moment - 1}{\moment}}
        + \frac{1}{n}}$  \\ \hline
    \end{tabular}
  \end{center}
  \caption{\label{table:upper-and-lower-bounds} Our known upper and lower
    bounds on the minimax risk for estimation of the mean of a distribution,
    given $n$ i.i.d. observations from a distribution $P$ on
     $\statdomain \subset \R^d$, where $P$
    satisfies the moment condition $\E[\ltwo{X}^\moment] \le 1$. The
    minimax risk is measured in \emph{squared} $\ell_2$-error.}
\end{table}

\section{Summary and open questions}
\label{sec:open-questions}

In this paper, we have provided a variety of definitions and formalisms for
privacy, as well as reviewing definitions already present in the literature.
We showed that testing-based definitions of privacy, which provide \emph{a
  priori} protection against disclosures of sensitive data, have some
similarities with differential privacy and related notions of privacy.  On
the other hand, differential privacy provides \emph{posterior} guarantees of
privacy and testing, and is in fact equivalent to variants of testing-based
notions of privacy that provide protection against inferences conditional on
the output of the private procedure.

To complement the definitional study we provide, we also investigated
consequences of our definitions for different estimation tasks for
population quantities. We identified a separation
between estimating means under (smooth) differential, approximate
differential, and the divergence-based ({\em a priori} testing) versions of
privacy, as exhibited by Table~\ref{table:upper-and-lower-bounds}.  It is
clear that there are many open questions remaining: first, our results are
not all sharp, as our upper and lower bounds match precisely only for the
strongest variants of privacy. Perhaps more interestingly, the weakest
(testing-based) definitions of total variation privacy is unsatisfactory
(recall the ``release-one-at-random'' scenario in
Example~\ref{ex:release-one}), but perhaps other divergences
(Definition~\ref{definition:divergence-privacy}) provide satisfactory
privacy protection. Such schemes allow substantially better estimation than
differential privacy constraints, as shown in
Table~\ref{table:upper-and-lower-bounds}, and may provide adequate
assurances of privacy in scenarios with a weaker adversary.

We believe that future work on alternate definitions of privacy, which
consider weaker adversaries (see \citet{BassilyGrKaSm13}), should be
fruitful. For example, differential privacy is equivalent to guarantees that
an adversary's posterior beliefs on the presence or absence of a data point
$x$ in a sample $X_{1:n}$ cannot be too different from his prior
beliefs---no matter the adversary's prior~\cite{KasiviswanathanSm13}. Can
restrictions on an adversary's prior beliefs, as studied by
\citet{BassilyGrKaSm13}, allow more accurate estimation? We believe any
proposal for privacy definitions should also include an exploration of the
fundamental limits of inferential procedures, as without such an
understanding, it is difficult to balance statistical utility and disclosure
risk. We hope that the techniques and insights we have developed here
provide groundwork for such future study into the tradeoffs between privacy
guarantees and estimation accuracy.


\subsection*{Acknowledgments}

We thank Philip Stark and Martin Wainwright for several insightful
conversations on and feedback about the paper, and Philip for suggesting
several variants of privacy and testing inequalities.

\appendix

\section{Proofs related to privacy definitions}

In this section, we collect proofs of the equivalence between our various
notions of privacy as well as a few consequences of our different
definitions.

\subsection{Proof of Proposition~\ref{proposition:diffp-test-errors}}
\label{sec:proof-diffp-test-errors}

We begin by proving that inequality~\eqref{eqn:hypothesis-test-dp} is
equivalent to $\diffp$-differential privacy.  Indeed, let $A \subset \Theta$
be an arbitrary set and let $\test(\theta)\defeq\indicb{\theta \in A}$.
Then if inequality~\eqref{eqn:hypothesis-test-dp} holds, we have
\begin{equation*}
  e^\diffp \channel(A \mid H_0) + (1 - \channel(A \mid H_1))
  \ge 1 - \delta, ~~~ \mbox{or} ~~~
  e^\diffp \channel(A \mid H_0) + \delta \ge \channel(A \mid H_1),
\end{equation*}
and similarly we have
\begin{equation*}
  (1 - \channel(A^c \mid H_0)) + e^\diffp \channel(A^c \mid H_1)
  \ge 1 - \delta, ~~~\mbox{so}  ~~~
  e^\diffp \channel(A^c \mid H_1) + \delta \ge \channel(A^c \mid H_0).
\end{equation*}
Since $A$ was arbitrary, the channel $\channel$ satisfies
Definition~\ref{definition:differential-privacy}. The other direction
is trivial.

Now we demonstrate inequality~\eqref{eqn:disclosure-risk}.
Applying \eqref{eqn:hypothesis-test-dp} twice, we have
\begin{equation*}
  \channel(\test = 1 \mid H_0) + e^\diffp\cdot \channel(\test = 0 \mid H_1)
  \ge 1 - \delta
  ~~~ \mbox{and} ~~~
  \channel(\test = 0 \mid H_1) + e^\diffp\cdot \channel(\test = 1 \mid H_0)
   \ge 1 - \delta
\end{equation*}
(where the second version holds by swapping $x_{1:n}$ with $x'_{1:n}$, and
replacing $\psi$ with $1-\psi$, then applying
\eqref{eqn:hypothesis-test-dp}).  Adding these two inequalities together, we
obtain
\begin{equation*}
  (e^\diffp + 1)\cdot
  \left(\channel(\test = 1 \mid H_0) + \channel(\test = 0 \mid H_1)\right)
  \ge 2 - \delta,
\end{equation*}
proving the first inequality in \eqref{eqn:disclosure-risk}. The second
statement of the inequality follows because $\frac{2}{1+e^\diffp}\geq
1-\frac{\diffp}{2}$ for all $\diffp\geq 0$.

\subsection{Proof of Observation~\ref{observation:information-processing}}
\label{sec:proof-information-processing}

The first statement of the observation
is immediate because of the data processing inequality
for $f$-divergences (see, e.g.~\citet[Theorem 14]{LieseVa06}):
we are guaranteed that for any samples $x_{1:n}$ and $x'_{1:n}$ in $\mc{X}^n$,
\begin{equation*}
  \df{\channel'(\cdot \!\mid x_{1:n})}{\channel'(\cdot \!\mid x'_{1:n})}
  \le \df{\channel(\cdot \!\mid x_{1:n})}{\channel(\cdot \!\mid x'_{1:n})}
\end{equation*}
by the Markovian construction of $\channel'$ from $\channel$, that is,
$\channel'=\channel''\circ\channel$ for some $\channel''$
by Definition~\ref{definition:deficient-channel}.

For the second observation, take two
samples $x_{1:n}$, $x'_{1:n}$ with $\dham(x_{1:n}, x'_{1:n}) \le 1$.  We use
the fact that $\E[W]=\int_0^\infty \P\{W\geq t\} dt$ for any non-negative
random variable $W$. We have that
$\channel' = \channel'' \circ \channel$ for some $\channel''$, so
\begin{align*}
  \channel'(A \mid x_{1:n})
  = \E_{Y\sim \channel(\cdot\mid x_{1:n})}[\channel''(A \mid Y)]
  & = \int_0^1 \P_{Y\sim \channel(\cdot\mid x_{1:n})}\left\{
  \channel''(A\mid Y)\geq t\right\} dt \\
  & = \int_0^1 \channel\left(\{y:\channel''(A\mid y)\geq t\}
  \mid x_{1:n}\right) dt.
\end{align*}
Applying the same reasoning to the sample $x'_{1:n}$, and using the fact
that $\channel$ is $(\diffp,\delta)$-differentially private, we then have
\begin{align*}
  \channel'(A \mid x'_{1:n})
  & = \int_0^1 \channel\left(
  \{y:\channel''(A\mid y)\geq t\}
  \mid x'_{1:n}\right) dt\\
  & \le \int_0^1 \left[ e^\diffp \channel\left(
    \{y:\channel''(A\mid y)\geq t\}\mid x_{1:n}\right) + \delta \right] dt 
  = e^\diffp \channel'(A\mid x_{1:n}) + \delta.
\end{align*}

\subsection{Proof of Theorem~\ref{theorem:chtp-implies-diffp}}
\label{sec:proof-chtp-implies-diffp}

We split the proof into the two statements: differential privacy
implies conditional hypothesis testing privacy, and conditional hypothesis
testing privacy (for
$\channel$ and for all less informative channels $\channel'\deficient
\channel$) implies differential privacy.

\subsubsection{Differential privacy implies conditional
  hypothesis testing privacy}

We need to show that for any samples $x_{1:n}$ and $x'_{1:n}$ differing in
at most one observation and measurable sets $A \subset \mc{Y}$ satisfying
$\channel(A \mid x_{1:n}) \wedge \channel(A \mid x'_{1:n}) \ge \deltach$,
\begin{equation*}
  \channel(\test = 1
  \mid x_{1:n}, Y\in A)
  + \channel(\test = 0 \mid x'_{1:n}, Y\in A)
  \ge
  1-\diffpch.
\end{equation*}
We assume that $\channel(A\mid x'_{1:n})\geq
(e^{2\diffp}-e^{\diffp})^{-1}\delta$, as otherwise CHTP is satisfied
regardless.

Let $B=\test^{-1}(\{1\})\subset \mc{Y}$ be the
acceptance region for the test $\test$.  Then by Bayes' rule and
differential privacy, we have
\begin{align*}
  \channel(\test = 1 \mid x_{1:n}, Y\in A)
  + \channel(\test = 0 \mid x'_{1:n}, Y\in A)
  & = \frac{\channel(A\cap B\mid x_{1:n})}{\channel(A\mid x_{1:n})}
  + \frac{\channel(A\cap B^c\mid x'_{1:n})}{\channel(A\mid x'_{1:n})} \\
  & \ge \frac{e^{-\diffp}( \channel(A\cap B\mid x'_{1:n}) - \delta)}{
    e^\diffp \channel(A\mid x'_{1:n}) + \delta}
  + \frac{\channel(A \cap B^c\mid x'_{1:n})}{\channel(A\mid x'_{1:n})} \\
  & \stackrel{(i)}{\ge}
  \frac{e^{-\diffp}( \channel(A\cap B\mid x'_{1:n}) - \delta)}{
    e^{2\diffp} \channel(A\mid x'_{1:n})}
  +\frac{\channel(A\cap B^c\mid x'_{1:n})}{\channel(A\mid x'_{1:n})},
\end{align*}
where inequality (i) follows from the assumption that $\channel(A
\mid x'_{1:n}) \ge \deltach = (e^{2\diffp} - e^\diffp)^{-1} \delta$.
Adding the fractions in the previous display, we obtain
\begin{align*}
  \lefteqn{\channel(\test = 1 \mid x_{1:n}, Y\in A)
    + \channel(\test = 0 \mid x'_{1:n}, Y\in A)} \\
  & \ge
  \frac{e^{-3\diffp} \cdot \channel(A \cap B\mid x'_{1:n})
    + \channel(A\cap B^c\mid x'_{1:n})}{\channel(A \mid x'_{1:n})}
  -\frac{e^{-3\diffp}\delta}{\channel(A \mid x'_{1:n})} \\
  & \geq e^{-3\diffp}\frac{\channel(A\cap B\mid x'_{1:n})
    +\channel(A\cap B^c\mid x'_{1:n})}{\channel(A\mid x'_{1:n})}
  -\frac{e^{-3\diffp}\delta}{ (e^{2\diffp}-e^{\diffp})^{-1}\delta}\\
  & = e^{-3\diffp}-(e^{2\diffp}-e^\diffp)e^{-3\diffp}
  = e^{-3\diffp} + e^{-2\diffp} - e^{-\diffp} = 1 - \diffpch,
\end{align*}
where the second inequality follows again by assumption that
$\channel(A \mid x'_{1:n}) \ge \deltach$.

\subsubsection{CHTP implies DP}

First, solving for $\diffpch$ and $\deltach$ in the statement of the theorem,
we have
\begin{equation*}
  \diffpch = \frac{e^{\frac{\diffp}{2}} - 1}{e^{\frac{\diffp}{2}} + 1}
  ~~~\mbox{and} ~~~
  \deltach = \delta\cdot e^{-\frac{\diffp}{2}}\;.
\end{equation*}
We need to show that
$\channel$ is $(\diffp,
\delta)$-differentially private, as long as for any $\wt{\channel}\deficient\channel$,
and
for any samples $x_{1:n}$ and $x'_{1:n}$ with
$\dham(x_{1:n}, x'_{1:n}) \le 1$, we have
\begin{multline}  \label{eqn:chpt-upper-bound}
  \wt{\channel}(\test = 1 \mid x_{1:n}, ; Z \in \wt{A})
  + \wt{\channel}(\test = 0 \mid x'_{1:n}; Z \in \wt{A}) \ge 1 - \diffpch\\
  \text{ \ for any set $\wt{A} \subset \mc{Z}$
with $\wt{\channel}(\wt{A} \mid x_{1:n}) \wedge
\wt{\channel}(\wt{A} \mid x'_{1:n}) \ge \deltach$}.
\end{multline}

For the sake of contradiction, let us assume that $\channel$ is 
not $(\diffp,\delta)$-differentially private, and so
there is a set $B$ and two samples $x_{1:n}$ and $x'_{1:n}$ with
$\dham(x_{1:n}, x'_{1:n}) \le 1$ such that
\begin{equation}
  \label{eqn:contradiction-diffp}
  \channel(B \mid x_{1:n}) > e^\diffp \channel(B \mid x'_{1:n}) + \delta.
\end{equation}
In particular, we will show that if there is a set $B$ satisfying
inequality~\eqref{eqn:contradiction-diffp}, then the upper
bound~\eqref{eqn:chpt-upper-bound} fails to hold. Let $C=B^c$
be the complement of $B$. 
 Set the thresholds
\begin{equation*}
  t_B \defeq 1 \wedge \frac{\channel(C \mid x'_{1:n})}{
    \channel(B \mid x_{1:n})}
  ~~~ \mbox{and} ~~~
  t_C \defeq 1 \wedge \frac{\channel(B \mid x_{1:n})}{\channel(C \mid x'_{1:n})}.
\end{equation*}

Let the channel $\wt{\channel}$ be defined by $\wt{\channel}(\cdot \!\mid X)
= \channel(\cdot \!\mid X) \times \uniform[0, 1]$, that is, the output of
$\wt{\channel}$ conditional on $X$ is the pair $(Y, U)$, where $Y \sim
\channel(\cdot \!\mid X)$ and $U$ is an independent uniform random
variable. In this case, we have the relation $\wt{\channel} \deficient
\channel$, so that $\wt{\channel}$ must satisfy
inequality~\eqref{eqn:chpt-upper-bound} for any test $\test$
 and samples $x_{1:n}$ and $x'_{1:n}$ satisfying
$\dham(x_{1:n},x'_{1:n}) \le 1$. If we define the Cartesian products
\begin{equation*}
  \wt{B} \defeq B \times [0, t_B]
  ~~~ \mbox{and} ~~~
  \wt{C} \defeq C \times [0, e^{-\frac{\diffp}{2}} t_C],
\end{equation*}
we also obtain the following pair of inequalities:
\begin{subequations}
  \label{eqn:wt-x01}
  \begin{equation}
    \begin{split}
      \wt{\channel}(\wt{B} \mid x_{1:n})
      = t_B \channel(B \mid x_{1:n})
      & \stackrel{\text{By }\eqref{eqn:contradiction-diffp}}{>}
      e^\diffp t_B \channel(B\mid x'_{1:n}) + t_B \delta
      = e^\diffp \wt{\channel}(\wt{B} \mid x'_{1:n}) + t_B\delta \\
      & ~~ \ge ~ e^\diffp \wt{\channel}(\wt{B} \mid x'_{1:n})
    \end{split}
    \label{eqn:wt-B-x0}
  \end{equation}
  and
  \begin{align}
    \wt{\channel}(\wt{C} \mid x'_{1:n})
    & = e^{-\frac{\diffp}{2}} t_C\channel(C\mid x'_{1:n})
    >
    e^{-\frac{\diffp}{2}} t_C \channel(C \mid x_{1:n})
    = \wt{\channel}(\wt{C} \mid x_{1:n}),
    \label{eqn:wt-C-x1}
  \end{align}
  where the strict inequality above follows
  from assumption~\eqref{eqn:contradiction-diffp}, as
  $\channel(C \mid
x_{1:n}) < \channel(C \mid x'_{1:n})$.
\end{subequations}
Moreover, we have the string of equalities
\begin{align}
  \wt{\channel}(\wt{B} \mid x_{1:n})
  = \left(1 \wedge \frac{\channel(C \mid x'_{1:n})}{
    \channel(B \mid x_{1:n})}\right) \channel(B \mid x_{1:n})
  & = \channel(C \mid x_{1:n}) \wedge \channel(C \mid x'_{1:n})
  \label{eqn:B-C-equal-channel} \\
  & = \left(1 \wedge \frac{\channel(B \mid x_{1:n})}{
    \channel(C \mid x'_{1:n})}\right) \channel(C\mid x'_{1:n})
  = e^{\frac{\diffp}{2}} \wt{\channel}(\wt{C} \mid x'_{1:n}). \nonumber 
\end{align}

With the strict inequalities~\eqref{eqn:wt-x01} and
equation~\eqref{eqn:B-C-equal-channel}, we can derive our desired
contradiction to the testing upper bound~\eqref{eqn:chpt-upper-bound},
which we prove by conditioning on $Z \in \wt{A}\coloneqq \wt{B}\cup\wt{C}$. 
First, we must check that $\wt{\channel}(\wt{A}\mid
x_{1:n}) \wedge \wt{\channel}(\wt{A} \mid x'_{1:n}) \ge \deltach$. Indeed,
since $\wt{A}=\wt{B}\cup\wt{C}$,
we have
\begin{equation*}
  \wt{\channel}(\wt{A} \mid x_{1:n})
  \wedge \wt{\channel}(\wt{A} \mid x'_{1:n})
  \ge \wt{\channel}(\wt{B} \mid x_{1:n}) \wedge \wt{\channel}(\wt{C}
  \mid x'_{1:n})
  = e^{-\frac{\diffp}{2}} \left(\channel(B \mid x_{1:n}) \wedge
  \channel(C \mid x'_{1:n})\right)
\end{equation*}
by Eq.~\eqref{eqn:B-C-equal-channel}. By
assumption~\eqref{eqn:contradiction-diffp}, we know that $\channel(B \mid
x_{1:n}) > \delta = e^{\frac{\diffp}{2}} \deltach$,
and inequality~\eqref{eqn:contradiction-diffp} implies
$1 - \channel(C \mid x_{1:n}) > e^\diffp(1 - \channel(C \mid x'_{1:n})) + \delta$,
and so
\begin{equation*}
  \channel(C \mid x'_{1:n}) > e^{-\diffp}(e^\diffp - 1) + e^{-\diffp} \delta
  + e^{-\diffp} \channel(C \mid x_{1:n}) \ge \delta
  = e^{\frac{\diffp}{2}} \deltach.
\end{equation*}
Therefore, the bound $\wt{\channel}(\wt{A}\mid
x_{1:n}) \wedge \wt{\channel}(\wt{A} \mid x'_{1:n}) \ge \deltach$ holds,
and we turn to contradicting the inequality~\eqref{eqn:chpt-upper-bound},
that is,
\begin{equation*}
  \wt{\channel}(\test = 1 \mid x_{1:n}; Z \in \wt{A})
  + \wt{\channel}(\test = 0 \mid x'_{1:n}; Z \in \wt{A}) \ge 1 - \diffpch.
\end{equation*}

To that end, we choose a particular test: let $\test(y)= \indicbs{y\in
  \wt{C}}$.  Then by Bayes' rule and the fact that $\wt{B}$ and $\wt{C}$ are
disjoint, we obtain
\begin{align*}
  \wt{\channel}(\test = 1 \mid x_{1:n}; Z \in \wt{A})
  & = \frac{\wt{\channel}(\wt{C} \mid x_{1:n})}{
    \wt{\channel}(\wt{B} \mid x_{1:n}) + \wt{\channel}(\wt{C} \mid x_{1:n})} \\
  & \stackrel{(i)}{<}
  \frac{\wt{\channel}(\wt{C} \mid x'_{1:n})}{
    \wt{\channel}(\wt{B} \mid x_{1:n}) + \wt{\channel}(\wt{C} \mid x'_{1:n})}
  \stackrel{(ii)}{=}
  \frac{\wt{\channel}(\wt{C} \mid x'_{1:n})}{
   e^{\frac{\diffp}{2}} \wt{\channel}(\wt{C} \mid x'_{1:n}) +  \wt{\channel}(\wt{C} \mid x'_{1:n})}
  = \frac{1}{e^{\frac{\diffp}{2}}+1}.
\end{align*}
where step (i) follows by inequality~\eqref{eqn:wt-C-x1} and
step (ii) follows from Eq.~\eqref{eqn:B-C-equal-channel}.
To lower bound the second probability in the testing upper
bound~\eqref{eqn:chpt-upper-bound}, we have
\begin{align*}
  \wt{\channel}(\test = 0 \mid x'_{1:n}; Z \in \wt{A})
  & = \frac{\wt{\channel}(\wt{B} \mid x'_{1:n})}{
    \wt{\channel}(\wt{B} \mid x'_{1:n}) + \wt{\channel}(\wt{C} \mid x'_{1:n})} \\
  & \stackrel{(i)}{<}
  \frac{e^{-\diffp}\wt{\channel}(\wt{B} \mid x_{1:n})}{
    e^{-\diffp} \wt{\channel}(\wt{B} \mid x_{1:n})
    + \wt{\channel}(\wt{C} \mid x'_{1:n})}
  \stackrel{(ii)}{=}
  \frac{e^{-\frac{\diffp}{2}} wt{\channel}(\wt{C} \mid x'_{1:n})}{
    e^{-\frac{\diffp}{2}} \wt{\channel}(\wt{C} \mid x'_{1:n})
    + \wt{\channel}(\wt{C} \mid x'_{1:n})}
  = \frac{1}{e^{\frac{\diffp}{2}}+ 1},
\end{align*}
where we have used inequality~\eqref{eqn:wt-B-x0} for step (i)
and Eq.~\eqref{eqn:B-C-equal-channel} again for step (ii).
Combining the two preceding displays,
we obtain
\begin{align*}
  \wt{\channel}(\test = 0 \mid x_{1:n}; Z \in \wt{A})
  +
  \wt{\channel}(\test = 1 \mid x'_{1:n}; Z \in \wt{A})    
  < \frac{2}{e^{\frac{\diffp}{2}}+1}
  & = 1 - \frac{e^{\frac{\diffp}{2}}-1}{e^{\frac{\diffp}{2}}+1}
  =  1-\diffpch,
\end{align*}
where we have recalled the definition of $\diffpch$. This
contradicts the testing bound~\eqref{eqn:chpt-upper-bound}.

\section{Proofs of Minimax Lower Bounds}
\label{appendix:lower-bounds}

In this section, we collect proofs of each of our minimax lower bounds
and their related results.

\subsection{Proof of Theorem~\ref{theorem:contraction}}
\label{sec:proof-contraction}

In this section we prove a slightly more general form of
Theorem~\ref{theorem:contraction}.  Let $\statprob_{0,i}$ and
$\statprob_{1,i}$, $i = 1, \ldots, n$ be probability distributions on
$\statdomain$, and let $\statprob_\packval^n$ be their $n$-fold products for
$\packval=0,1$ (that is, we draw independent, but not necessarily
identically distributed, observations $X_1\sim \statprob_{\packval,1}$,
\dots, $X_n\sim\statprob_{\packval,n}$).  Under $\diffp$-total-variation
privacy (Def.~\ref{definition:tv-private}), we will prove that
\begin{equation}\label{eqn:prop_product}
  \tvnorm{\marginprob_0^n - \marginprob_1^n}
  \le 2 \diffp \sum_{i=1}^n \tvnorm{\statprob_{0,i} - \statprob_{1,i}}.
\end{equation}
For the special case that $\statprob_{\packval,i}=\statprob_{\packval}$ for
all $i=1,\dots,n$ (for each $\packval=0,1$), this proves that
\begin{equation*}
  \tvnorm{\marginprob_0^n -
    \marginprob_1^n} \le 2 n \diffp \tvnorm{\statprob_0 - \statprob_1}.
\end{equation*}
The inequality $\tvnorm{\marginprob_0^n - \marginprob_1^n}
\le\tvnorm{\statprob_0^n - \statprob_1^n}$ is immediate from the classical data
processing inequality (cf.~\cite[Theorem 14]{LieseVa06}), so proving
inequality~\eqref{eqn:prop_product} is sufficient to prove the theorem.

Now we turn to the proof of \eqref{eqn:prop_product}.
By the product nature of $\statprob_\packval^n(\statsample_{1:n})$ for each 
$\packval=0,1$,
we have
\begin{equation*}
  d\statprob_0^n(\statsample_{1:n})
  - d\statprob_1^n(\statsample_{1:n})
  = \sum_{i=1}^n d \statprob_1^{i-1}(\statsample_{1:i-1})
  (d \statprob_{0,i}(\statsample_i) - d\statprob_{1,i}(\statsample_i))
  d \statprob_0^{n-i}(\statsample_{i+1:n}).
\end{equation*}
For any set $A \in \sigma(\channeldomain)$, we thus have
\begin{align*}
  \lefteqn{|\marginprob_0^n(A) - \marginprob_1^n(A)|
    = \bigg| \int_{\statdomain^n}
    \channel(A \mid \statsample_{1:n})
    d \statprob_0^n(\statsample_{1:n}) - d\statprob_1^n(\statsample_{1:n})
    \bigg|} \\
  & = \bigg|\sum_{i = 1}^n \int_{\statdomain^n}
  \channel(A \mid \statsample_{1:n})
  d \statprob_1^{i-1}(\statsample_{1:i-1})
  (d \statprob_{0,i}(\statsample_i) - d\statprob_{1,i}(\statsample_i))
  d \statprob_0^{n-i}(\statsample_{i+1:n})
  \bigg|, \\
  \intertext{and writing $x_{\setminus i} \defeq \{x_1,
\ldots, x_{i-1}, x_{i+1}, \ldots, x_n\}$,}
  & \le \sum_{i=1}^n
  \bigg|\int_{\statdomain^n} \left(\channel(A \mid \statsample_{1:n})
  - \channel(A \mid \statsample_{\setminus i}, \statsample_i')\right)
  d \statprob_1^{i-1}(\statsample_{1:i-1})
  (d \statprob_{0,i}(\statsample_i) - d\statprob_{1,i}(\statsample_i))
  d \statprob_0^{n-i}(\statsample_{i+1:n})
  \bigg| \\
  & \le \sum_{i=1}^n \sup_{\statsample_{1:n}, \statsample_{1:n}'}
  |\channel(A \mid \statsample_{1:n}) - \channel(A \mid \statsample_{1:n}')|
  \int_{\statdomain^n}   d \statprob_1^{i-1}(\statsample_{1:i-1})
  |d \statprob_{0,i}(\statsample_i) - d\statprob_{1,i}(\statsample_i)|
  d \statprob_0^{n-i}(\statsample_{i+1:n}),
\end{align*}
where the supremum is taken over samples with $\dham(\statsample_{1:n},
\statsample_{1:n}') \le 1$. By our privacy assumption, we have
\begin{equation*}
  \sup_{\statsample_{1:n}, \statsample_{1:n}'}
  |\channel(A \mid \statsample_{1:n}) - \channel(A \mid \statsample_{1:n}')|
  \le \sup_{\statsample_{1:n}, \statsample_{1:n}'}
  \tvnorm{\channel(\cdot \!\mid \statsample_{1:n}) -
    \channel(\cdot \!\mid \statsample_{1:n}')}
  \le \diffp,
\end{equation*}
and since $\int |d\statprob_{0,i} - d\statprob_{1,i}| = 2
\tvnorm{\statprob_{0, i} - \statprob_{1,i}}$, this completes the proof.


\subsection{Proof of Theorem~\ref{theorem:population-packing}}
\label{sec:proof-population-packing}

\newcommand{\psucc}{P_{\mathsf{succ}}}

We begin the proof of Theorem~\ref{theorem:population-packing} by stating a
lemma that shows, roughly, that a set $A$ with high probability under a
distribution $P_{\theta_\packval}$ must also have high probability under
$P_{\theta_{\altpackval}}$, so long as the estimator $\what{\theta}$ is
$\diffp$-differentially private.  We recall the definition of
$P_{\theta_\packval} = (1 - p) P_0 + p P_{\packval}$ (where the sample size
$n$ is implicit).
\begin{lemma}
  \label{lemma:mass-everywhere}
  Let $A$ be a measurable set, 
  and $\packval, \altpackval \in \packset$.
  Assume that $P_{\theta_\packval} \in \mc{P}$ for all $\packval$.
  Then
  if $\what{\theta}$ is $(\diffp, \delta)$-approximately
  differentially private,
  \begin{equation}
    P_{\theta_\packval}(\what{\theta} \in A)
    \ge e^{-\diffp \lceil np\rceil}
    \left[P_{\theta_\altpackval}(\what{\theta} \in A)
      - \half\right]
    - \delta \frac{1 - e^{-\diffp \ceil{np}}}{1 - e^{-\diffp}}.
    \label{eqn:bernoullis-approximate-diffp}
  \end{equation}
\end{lemma}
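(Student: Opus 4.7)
The plan is to couple the samples drawn from $P_{\theta_\packval}^n$ and $P_{\theta_\altpackval}^n$ so that, with probability at least $1/2$, they agree in all but at most $\lceil np\rceil$ coordinates, and then transfer $\what\theta$-probabilities between the two marginals via a group-privacy consequence of approximate differential privacy.

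First, I would set up the coupling. For each $i = 1,\dots,n$, independently draw $B_i \sim \mathrm{Bernoulli}(p)$, $Z_i \sim P_0$, and, independently across $\packval \in \packset$, $Y_i^{(\packval)} \sim P_\packval$, and set
\begin{equation*}
X_i^{(\packval)} \defeq (1 - B_i)\, Z_i + B_i\, Y_i^{(\packval)}.
\end{equation*}
Marginally, $X_{1:n}^{(\packval)} \sim P_{\theta_\packval}^n$ for every $\packval$, while $X_i^{(\packval)} = X_i^{(\altpackval)}$ whenever $B_i = 0$. Consequently, on the event $E = \{\sum_{i=1}^n B_i \le \lceil np \rceil\}$ the two coupled samples satisfy $\dham(X_{1:n}^{(\packval)}, X_{1:n}^{(\altpackval)}) \le \lceil np \rceil$. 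Because $\sum_i B_i \sim \mathrm{Binomial}(n,p)$ has median in $\{\lfloor np \rfloor, \lceil np \rceil\}$, we have $\P(E) \ge 1/2$.

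Second, I would invoke group privacy: iterating $(\diffp,\delta)$-approximate DP along any chain of samples that differ in one entry at a time gives, for all $s, s' \in \mc{X}^n$ with $\dham(s, s') \le k$,
\begin{equation*}
Q(A \mid s') \le e^{k\diffp} Q(A \mid s) + \delta \cdot \frac{e^{k\diffp} - 1}{e^{\diffp} - 1}.
\end{equation*}
Rearranging and using $e^{\diffp} - 1 \ge 1 - e^{-\diffp}$ yields
\begin{equation*}
Q(A \mid s) \ge e^{-k\diffp}\, Q(A \mid s') - \delta \cdot \frac{1 - e^{-k\diffp}}{1 - e^{-\diffp}}.
\end{equation*}

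Third, I would combine the two ingredients. Applying the last display pointwise with $k = \lceil np \rceil$ on the event $E$ and taking expectation under the coupling,
\begin{align*}
P_{\theta_\packval}(\what\theta \in A)
&\ge \E\bigl[Q(\what\theta \in A \mid X_{1:n}^{(\packval)})\, \indicbs{E}\bigr] \\
&\ge e^{-\diffp \lceil np\rceil}\, \E\bigl[Q(\what\theta \in A \mid X_{1:n}^{(\altpackval)})\, \indicbs{E}\bigr] - \delta \cdot \frac{1 - e^{-\diffp \lceil np \rceil}}{1 - e^{-\diffp}}.
\end{align*}
The remaining expectation equals $P_{\theta_\altpackval}(\what\theta \in A, E)$ under the joint coupling law, and
\begin{equation*}
P_{\theta_\altpackval}(\what\theta \in A, E) \ge P_{\theta_\altpackval}(\what\theta \in A) - \P(E^c) \ge P_{\theta_\altpackval}(\what\theta \in A) - \tfrac{1}{2},
\end{equation*}
which upon substitution yields exactly inequality~\eqref{eqn:bernoullis-approximate-diffp}.

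The main obstacle is bookkeeping: threading the subtractive $1/2$ from $\P(E^c)$ and the additive group-privacy slack $\delta(1-e^{-\diffp\lceil np\rceil})/(1-e^{-\diffp})$ through the coupling expectation so that each lands with the correct prefactor. A subsidiary point is verifying that the coupling genuinely preserves each marginal---i.e., $X_{1:n}^{(\altpackval)} \sim P_{\theta_\altpackval}^n$ despite sharing $B_i$ and $Z_i$ with $X_{1:n}^{(\packval)}$---which holds because each $Y_i^{(\altpackval)}$ is independent of $(B_i, Z_i, Y_i^{(\packval)})$.
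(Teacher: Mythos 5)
Your proposal is correct and follows essentially the same route as the paper: the same Bernoulli-mixture coupling making the two samples agree off the $B_i=1$ coordinates, the same iterated (group-privacy) form of approximate DP with slack $\delta(1-e^{-\diffp\lceil np\rceil})/(1-e^{-\diffp})$, and the same use of the fact that the median of $\mathrm{Binomial}(n,p)$ is at most $\lceil np\rceil$ to absorb the complement of the event $\{\sum_i B_i \le \lceil np\rceil\}$ into the subtractive $1/2$. The only cosmetic difference is that you phrase the final step as an expectation against an indicator, whereas the paper sums explicitly over the configurations $b\in\{0,1\}^n$ with $b^\top\mathbf{1}\le\lceil np\rceil$.
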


Deferring the proof of Lemma~\ref{lemma:mass-everywhere}, we now show how it
leads to a short proof of Theorem~\ref{theorem:population-packing}.  Let
$\ball_\epsilon(\theta) = \{\theta' \in \Theta : \metric(\theta, \theta')
\le \epsilon\}$. Then by assumption on $\separation{\packset}$, the balls
$\ball_{\separation{\packset}}(\theta_\packval)$ are disjoint for all $\packval$.
Now, for an estimator $\what{\theta}$, let
the average probability of success be
\begin{equation}
  \label{eqn:avg-success-prob}
  \psucc \defeq \frac{1}{|\packset|}
  \sum_{\packval \in \packset} P_{\theta_\packval}\left(\what{\theta}
  \in \ball_{\separation{\packset}}(\theta_\packval)\right).
\end{equation}
Then we have
\begin{align*}
  \psucc  = 1 - \frac{1}{|\packset|}
  \sum_{\packval \in \packset} P_{\theta_\packval}\left(\what{\theta}
  \not\in \ball_{\separation{\packset}}(\theta_\packval)\right)
  & \le
  1 - \frac{1}{|\packset|} \sum_{\packval \in \packset}
  P_{\theta_\packval}\bigg(\what{\theta} \in 
  \bigcup_{\altpackval \in \packset, \altpackval \neq \packval}
  \ball_{\separation{\packset}}(\theta_\altpackval)\bigg) \\
  & = 1 - \frac{1}{|\packset|}
  \sum_{\packval \in \packset}
  \sum_{\altpackval \in \packset, \altpackval \neq \packval}
  P_{\theta_\packval}\left(\what{\theta} \in
  \ball_{\separation{\packset}}(\theta_\altpackval)\right),
\end{align*}
where the inequality follows from the disjointness of the balls
$\ball_{\separation{\packset}}(\theta_\packval)$.  Using
Lemma~\ref{lemma:mass-everywhere}, we can lower bound the probability
$P_{\theta_\packval}(\what{\theta} \in
\ball_{\separation{\packset}}(\theta_\altpackval))$, whence we find that
\begin{align*}
  \psucc & \stackrel{\eqref{eqn:bernoullis-approximate-diffp}}{\le}
  1 - \frac{1}{|\packset|}
  \sum_{\packval \in \packset} \sum_{\altpackval \in \packset,
    \altpackval \neq \packval}
  \left[e^{-\diffp \ceil{n p}} \left(P_{\theta_\altpackval}
    \left(\what{\theta} \in \ball_{\separation{\packset}}(\theta_\altpackval)\right)
    - \half \right) - \delta \frac{1 - e^{-\diffp \ceil{np}}}{
      1 - e^{-\diffp}}\right] \\
  & = 1 - e^{-\diffp \ceil{np}}
  \frac{(|\packset| - 1)}{|\packset|} \sum_{\packval \in \packset}
  P_{\theta_\packval}
  \left(\what{\theta} \in \ball_{\separation{\packset}}(\theta_\packval)\right)
  + e^{-\diffp \ceil{n p}} \frac{|\packset| - 1}{2}
  + (|\packset| - 1) \delta \frac{1 - e^{-\diffp \ceil{np}}}{
    1 - e^{-\diffp}} \\
  & = 1 + (|\packset| - 1) \left[\frac{e^{-\diffp \ceil{np}}}{2}
    + \delta \frac{1 - e^{-\diffp \ceil{np}}}{1 - e^{-\diffp}}\right]
  - e^{-\diffp \ceil{np}} (|\packset| - 1) \psucc,
\end{align*}
where we have used the definition~\eqref{eqn:avg-success-prob} of $\psucc$.
Rearranging terms, we obtain
\begin{equation*}
  \psucc \le
  \frac{1 + (|\packset| - 1)\cdot \left(\half e^{-\diffp \ceil{np}}
    + \delta \frac{1 - e^{-\diffp\ceil{np}}}{1 - e^{-\diffp}}\right)}{
    1 + (|\packset|-1) \cdot e^{-\diffp \ceil{np}}}.
\end{equation*}
Lower bounding $1 - \psucc$ gives the theorem.

\begin{proof-of-lemma}[\ref{lemma:mass-everywhere}]
  Let $B = \{B_i\}_{i=1}^n$ be sequence of i.i.d.\ $\bernoulli(p)$ random
  variables.  Now, assume that observations are generated according to the
  following distribution: first, draw $W_1^0, \dots, W_n^0 \simiid P_0$ and
  draw $W^\packval_1,\dots,W^\packval_n \simiid P_\packval$.  Then for each
  $i$, if $B_i=0$, set $X_i = W_i^0$, while if $B_i = 1$, set
  $X_i=W^\packval_i$. By inspection, we have that observations are
  marginally drawn i.i.d.\ according to the mixture $P_{\theta_\packval} =
  (1 - p) P_0 + p P_\packval$.  Additionally, for fixed
  $\altpackval \in \packset$, generate an alternate
  sample by drawing
  $W^{\altpackval}_i \simiid P_{\altpackval}$ and setting
  \begin{equation*}
    X'_i = W_i^0 \cdot (1 - B_i) + W^\altpackval_i \cdot B_i
  \end{equation*}
  for each $i$. By construction, we observe that
  \begin{equation*}
    \dham(X_{1:n},X_{1:n}') \le B^\top \onevec.
  \end{equation*}

  By definition of $(\diffp, \delta)$-approximate differential
  privacy, we have for any fixed sequence $b \in \{0, 1\}^n$ that
  \begin{align}
    & \channel(\what{\theta} \in A \mid X_i = W_i^0\cdot (1 - b_i)
    + W^\packval_i\cdot b_i ~ \mbox{for~each~} i \in [n]) \nonumber \\
    &\geq e^{-\diffp b^\top \onevec}
    \channel(\what{\theta} \in A \mid X_i = W_i^0 \cdot (1 - b_i) 
    + W^{\altpackval}_i\cdot b_i ~ \mbox{for~each~} i \in [n])
    -\sum_{i=0}^{b^\top \onevec - 1} \delta e^{-\diffp i} \nonumber \\
    & = e^{-\diffp b^\top \onevec} 
    \channel(\what{\theta} \in A \mid X_i= W_i^0\cdot (1-b_i) +
    W^{\altpackval}_i\cdot b_i ~\mbox{for~each~} i \in [n])
    - \delta \frac{1 - e^{-\diffp b^\top \onevec}}{1-e^{-\diffp}}.
    \label{eqn:q-b-lower-bound}
  \end{align}
  By construction, we have
  \begin{align*}
    & P_{\theta_\packval}(\what{\theta} \in A)
    = \sum_{b \in \{0, 1\}^n} P(B = b) P_{\theta_\packval}(\what{\theta} \in A
    \mid B = b) \\
    & = \sum_{b \in \{0, 1\}^n} P(B = b)
    \int \channel(\what{\theta} \in A \mid X_i= w_i^0 \cdot (1-b_i)
    + w^\packval_i \cdot b_i \mbox{~for~} i \in [n])
    dP_0^n(w_{1:n}^0)dP_\packval^n(w^\packval_{1:n})
  \end{align*}
  Removing some terms in the summation corresponding to $b^\top \onevec \le
  \ceil{np}$ and integrating over the additional variables $w_i^\altpackval$,
  we obtain
  \begin{align*}
    & P_{\theta_\packval}(\what{\theta} \in A) \\
    & \geq\sum_{\substack{b \in \{0, 1\}^n\\ b^\top \onevec \leq \ceil{np}}}
    \!\!\!\!\! P(B = b)
    \int \channel(\what{\theta} \in A \mid X_i=w_i^0\cdot (1 - b_i)
    + w^\packval_i \cdot b_i \mbox{~for~} i \in [n])
    dP_0^n(w_{1:n}^0)dP_\packval^n(w^\packval_{1:n}) \\
    & = \sum_{\substack{b \in \{0, 1\}^n\\ b^\top \onevec \leq \ceil{np}}}
    \!\!\!\!\! P(B = b)
    \int \channel(\what{\theta} \in A \mid X_i=w_i^0\cdot (1-b_i) +
    w^\packval_i\cdot b_i \mbox{~for~} i \in [n]) dP_0^n(w^0_{1:n})
    dP_\packval^n(w^\packval_{1:n}) dP_{\altpackval}^n(w^{\altpackval}_{1:n}).
  \end{align*}
  Applying the approximate differential privacy lower
  bound~\eqref{eqn:q-b-lower-bound}, we obtain the further lower bound
  \begin{align*}
    P_{\theta_\packval}(\what{\theta} \in A)
    & \ge \sum_{\substack{b \in \{0, 1\}^n\\ b^\top \onevec \leq \ceil{np}}}
    P(B = b)
    \int
    \bigg[e^{-\diffp \ceil{np}}
      \channel\left(\what{\theta} \in A \mid X_i=w_i^0\cdot (1-b_i)
      + w^{\altpackval}_i\cdot b_i \mbox{~for~} i \in [n]\right)
      \ldots \\
      & \qquad\qquad \qquad\qquad\qquad\qquad ~
      - \delta \frac{1 - e^{-\diffp \ceil{np}}}{1-e^{-\diffp}}\bigg]
    dP_0^n(w^0_{1:n}) dP_\packval^n(w^\packval_{1:n})
    dP_{\altpackval}^n(w^{\altpackval}_{1:n}) \\
    & = \sum_{\substack{b \in \{0, 1\}^n\\ b^\top \onevec \leq 
        \ceil{np}}} P(B = b)
    \left(e^{-\diffp\ceil{np}}
    P_{\theta_{\altpackval}}(\what{\theta}\in A\mid B=b) -
    \delta \frac{1 - e^{-\diffp \ceil{np}}}{1-e^{-\diffp}}\right) \\
    & \ge e^{-\diffp\ceil{np}}
    P_{\theta_{\altpackval}}(\what{\theta}\in A, B^\top \onevec \leq \ceil{np})
    - \delta 
    \frac{1 - e^{-\diffp \ceil{np}}}{1-e^{-\diffp}}\\  
    &\geq e^{-\diffp\ceil{np}}
    \left( P_{\theta_{\altpackval}}(\what{\theta}\in A)
    - P( B^\top \onevec > \ceil{np})\right)
    - \delta \frac{1 - e^{-\diffp \ceil{np}}}{1-e^{-\diffp}},
  \end{align*}
  the last inequality following from a union bound.
  The median of the $\binomial(n,p)$ distribution is no larger
  than $\ceil{np}$, so we obtain
  \begin{align*}
    P_{\theta_\packval}(\what{\theta} \in A)
    \geq e^{-\diffp\ceil{np}}\left(
    P_{\theta_{\altpackval}}(\what{\theta}\in A) - \half\right)
    - \delta \frac{1 - e^{-\diffp \ceil{np}}}{1-e^{-\diffp}}.
  \end{align*}
  for any set $A$.
\end{proof-of-lemma}

\subsection{Proof of Proposition~\ref{proposition:density-lower-bound}}
\label{sec:proof-density-lower-bound}

The first term in the bound~\eqref{eqn:density-lower-bound} is a standard
result in nonparametric density estimation; see, for example, \citet[Theorem
  2.8]{Tsybakov09}, \citet[Chapter 4]{DevroyeGy85}, or \citet[Section
  6]{YangBa99}. We thus focus on the second term in the lower
bound~\eqref{eqn:density-lower-bound}.

Let $P_0$ be the uniform distribution on $[0, 1]^d$, with density $f \equiv
1$. Standard results in approximation theory and density estimation (see,
for example, the \citet[Chapter 4]{DevroyeGy85},
\citet{YangBa99}, or \citet[Section 5]{Lorentz66}) show the following
result: the packing entropy for the collection of $1$-Lipschitz
densities on $[0, 1]^d$ scales as $(1 / \epsilon)^d$.  More concretely,
there exist constants $c_0, c_1 > 0$ (that may depend on the dimension $d$)
such that for any $\epsilon \in \openleft{0}{1}$, there exists a collection
$\{f_\packval\}_{\packval \in \packset}$ of densities $f_\packval$, where
each density $f_\packval$ is $1$-Lipschitz continuous, $\ltwo{f_\packval -
  f_{\altpackval}} \ge c_0 \epsilon$, the set $\packset$ has cardinality
\begin{equation}
  \label{eqn:loads-of-densities}
  \log |\packset| \ge c_1 \frac{1}{\epsilon^d},
  ~~~ \mbox{and} ~~~
  f_\packval(x) \in [1 - \epsilon, 1 + \epsilon]
  ~ \mbox{for~} x \in [0, 1]^d.
\end{equation}

Now, choose $p \in \openleft{0}{1}$, and set $\epsilon = p$ in the
construction leading to the inequalities~\eqref{eqn:loads-of-densities}.
Then the density $1 + (1/p) (f_\packval - 1)$ is a valid density and is
$(1/p)$-Lipschitz. If $P_\packval$ denotes the distribution with this
density, then we have $(1 - p) P_0 + p P_\packval \in \pclass$,
and moreover, the mixture $(1 - p) P_0 + p P_\packval$
has density $(1 - p) + p[\frac{1}{p}(f_\packval - 1) + 1] = f_\packval$.
In particular, we have the separation (for the metric $\metric(f, g)
= \ltwo{f - g}$)
\begin{equation*}
  \separation{\packset} \ge c_0 p.
\end{equation*}
We now apply Theorem~\ref{theorem:population-packing}, setting $\delta=0$
as we are working with differential privacy.
Letting $f_P$ denote the density associated with the
distribution $P$, we obtain that for
any $\diffp$-differentially private estimator $\what{f}$ based
on $n$ observations and any $p \in [0, 1]$ that
\begin{align*}
  \sup_{P \in \pclass}
  \E\left[\ltwos{\what{f} - f_P}^2\right] & \ge
  \separation{\packset}^2 \left(\frac{\half
    \left(e^{c_1 (\frac{1}{p})^d} - 1\right)
    e^{-\diffp \ceil{np}}}{1 +
    (e^{c_1 (\frac{1}{p})^d} - 1) e^{-\diffp \ceil{np}}}
  \right) \\
  & \ge c_0^2 \, p^2
  \cdot \frac{e^{c_1 (\frac{1}{p})^d - \diffp \ceil{np}} -
    e^{-\diffp \ceil{np}}}{
    1 + e^{c_1 (\frac{1}{p})^d - \diffp \ceil{np}} -
    e^{-\diffp \ceil{np}}}.
\end{align*}
By choosing $p = \min\{\half (n \diffp/c_1)^{-\frac{1}{d+1}}, 1\}$ we obtain
\begin{equation*}
  c_0^2 \, p^2
  \frac{e^{c_1 (\frac{1}{p})^d - \diffp \ceil{np}} -
    e^{-\diffp \ceil{np}}}{
    1 + e^{c_1 (\frac{1}{p})^d - \diffp \ceil{np}} -
    e^{-\diffp \ceil{np}}}
  \ge 
  \frac{1}{(n \diffp)^{\frac{2}{d + 1}}}
  \cdot c_0^2
  \cdot c_d,
\end{equation*}
where $c_d$ is a constant that may depend on $d$. This gives the desired
result~\eqref{eqn:density-lower-bound}.

\section{Proof of Proposition~\ref{proposition:minimax-upper}}
\label{sec:proof-minimax-upper}

We begin our proof by presenting two lemmas, the first of which gives
a bound on the bias of our estimator, the second showing that the variance
of random vectors projected onto convex sets is always smaller than
the initial variance.

\begin{lemma}
  \label{lemma:proj-mean}
  Let $X$ be an arbitrary random vector with $\E[\ltwo{X}^\moment] < \infty$
  for some $\moment > 1$.
  Then there exists a vector $v \in \R^d$ satisfying $\ltwo{v} \le
  \E[\ltwo{X}^\moment] / ((\moment-1) T^{\moment-1})$
  such that $\E[\truncate{X}{T}] = \E[X] + v$.
\end{lemma}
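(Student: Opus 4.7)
The plan is to set $v \defeq \E[\truncate{X}{T}] - \E[X] = \E[\truncate{X}{T} - X]$ directly, so the equality in the conclusion holds by construction; the work is then to bound $\ltwo{v}$.

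First I would observe that the truncation operator $\truncate{\cdot}{T}$ (projection onto the $\ell_2$-ball of radius $T$) is the identity on $\{\ltwo{X} \le T\}$, while on $\{\ltwo{X} > T\}$ it scales $X$ to $(T/\ltwo{X})X$. Consequently
\begin{equation*}
  \truncate{X}{T} - X = \bigl(T/\ltwo{X} - 1\bigr) X \cdot \indicb{\ltwo{X} > T},
\end{equation*}
which has Euclidean norm $(\ltwo{X} - T)_+ \defeq \max\{\ltwo{X} - T, 0\}$. By Jensen's inequality applied to $\ltwo{\cdot}$,
\begin{equation*}
  \ltwo{v} \le \E\bigl[\ltwo{\truncate{X}{T} - X}\bigr] = \E\bigl[(\ltwo{X} - T)_+\bigr].
\end{equation*}

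Next I would convert this expectation into a tail integral. Writing $Y = \ltwo{X}$, we have $\E[(Y - T)_+] = \int_T^\infty \P(Y > t)\,dt$, and then apply Markov's inequality in the form $\P(Y > t) = \P(Y^\moment > t^\moment) \le \E[Y^\moment]/t^\moment$. The moment hypothesis $\moment > 1$ makes the resulting integral convergent, and we obtain
\begin{equation*}
  \ltwo{v} \le \int_T^\infty \frac{\E[\ltwo{X}^\moment]}{t^\moment}\, dt
  = \frac{\E[\ltwo{X}^\moment]}{(\moment - 1)\, T^{\moment - 1}},
\end{equation*}
as required. There is no real obstacle here; the only subtlety is to be careful that the $\moment > 1$ assumption is used precisely to ensure integrability of $t^{-\moment}$ on $[T,\infty)$, and that Jensen's inequality is what permits us to take the norm inside the expectation before invoking Markov's bound.
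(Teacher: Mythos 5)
Your proof is correct and follows essentially the same route as the paper's: set $v = \E[\truncate{X}{T} - X]$, move the norm inside the expectation by Jensen, convert to a tail integral, and apply Markov's inequality together with $\moment > 1$ for convergence. Your intermediate step is in fact slightly cleaner, since you use the exact identity $\ltwo{\truncate{X}{T} - X} = \max\{\ltwo{X}-T,\,0\}$, for which the tail-integral formula $\E[\max\{Y-T,0\}] = \int_T^\infty \P(Y>t)\,dt$ holds exactly, whereas the paper bounds the difference by $\ltwo{X}\cdot\indicb{\ltwo{X} > T}$ before passing to the same tail integral.
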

\begin{proof}
  It is clear that $\E[\truncate{X}{T}] = \E[X] + \E[\truncate{X}{T} - X]$, and
  $\ltwo{\E[\truncate{X}{T} - X]} \le \E[\ltwo{\truncate{X}{T} - X}]$.
  But $\ltwo{X - \truncate{X}{T}} \le \ltwo{X} \cdot\indicb{\ltwo{X} > T}$, and
  consequently for any $\moment > 1$ we have
  \begin{equation*}
    \E[\ltwo{\truncate{X}{T} - X}]
    \le \E[\ltwo{X} \cdot\indicb{\ltwo{X} \ge T}]
    = \int_T^\infty \P(\ltwo{X} \ge t) dt
    \le \int_T^\infty \frac{\E[\ltwo{X}^\moment]}{t^\moment} dt
    = \frac{\E[\ltwo{X}^\moment]}{(\moment - 1) T^{\moment - 1}},
  \end{equation*}
  where the second inequality follows from Markov's inequality.
\end{proof}

\begin{lemma}
  \label{lemma:proj-variance}
  Let $X$ be an $\R^d$-valued random variable
  and let $\Pi_C(x)$ denote the (Euclidean) projection
  of a point $x \in \R^d$ onto a closed convex set $C$. Then
  $\E[\ltwos{\Pi_C(X)-\E[\Pi_C(X)]}^2] \le \E[\ltwos{X-\E[X]}^2]$.
\end{lemma}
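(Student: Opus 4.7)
The plan is to reduce the claim to the well-known fact that Euclidean projection onto a closed convex set is $1$-Lipschitz (nonexpansive), that is, $\ltwo{\Pi_C(x) - \Pi_C(y)} \le \ltwo{x - y}$ for all $x, y \in \R^d$. This nonexpansiveness is a standard consequence of the first-order optimality condition for the projection: $\langle z - \Pi_C(x), x - \Pi_C(x)\rangle \le 0$ for all $z \in C$, which, applied symmetrically with $z = \Pi_C(y)$ and $z = \Pi_C(x)$ and added, gives $\ltwo{\Pi_C(x) - \Pi_C(y)}^2 \le \langle \Pi_C(x) - \Pi_C(y), x - y\rangle$, and Cauchy--Schwarz finishes it.

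Given this, my first approach is the variational characterization of variance: for any random vector $Y \in \R^d$ with $\E[\ltwo{Y}^2] < \infty$, we have
\begin{equation*}
  \E[\ltwos{Y - \E[Y]}^2] = \inf_{v \in \R^d} \E[\ltwos{Y - v}^2].
\end{equation*}
Applying this with $Y = \Pi_C(X)$ and choosing the candidate vector $v = \Pi_C(\E[X])$, we obtain
\begin{equation*}
  \E[\ltwos{\Pi_C(X) - \E[\Pi_C(X)]}^2]
  \le \E[\ltwos{\Pi_C(X) - \Pi_C(\E[X])}^2]
  \le \E[\ltwos{X - \E[X]}^2],
\end{equation*}
where the final step uses the $1$-Lipschitz property of $\Pi_C$ pointwise inside the expectation. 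This gives the claim in two short lines.

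An equivalent and equally clean alternative is the ``two independent copies'' identity: if $X'$ is an independent copy of $X$, then $\E[\ltwo{X - X'}^2] = 2 \E[\ltwo{X - \E[X]}^2]$ (and similarly with $\Pi_C(X)$ in place of $X$); applying nonexpansiveness inside the expectation and dividing by $2$ yields the conclusion. I would likely present the variational argument since it avoids introducing a coupling.

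There is essentially no obstacle here: the only nontrivial ingredient is the nonexpansiveness of projection onto a closed convex set, which is classical and can either be cited or derived in a single line from the first-order optimality condition. One minor thing worth noting in the write-up is that we only need the integrability $\E[\ltwo{X}^2] < \infty$ (as in the setting of its use in the proof of Proposition~\ref{proposition:minimax-upper}) so that both sides are finite and the variational identity applies; if only a lower moment is assumed, one can still bound $\E[\ltwo{\Pi_C(X)}^2]$ provided $C$ is bounded, since $\ltwo{\Pi_C(x)}$ is bounded uniformly in $x$ whenever $C$ is.
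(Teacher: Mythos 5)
Your proposal is correct, and your primary argument takes a mildly different route from the paper's. The paper proves the lemma by symmetrization: introducing an independent copy $X'$ of $X$, using the identity $\E[\ltwo{X - X'}^2] = 2\,\E[\ltwo{X - \E[X]}^2]$ (and its analogue for $\Pi_C(X)$), and then applying nonexpansiveness of $\Pi_C$ pointwise to the pair $(X, X')$ --- exactly the ``two independent copies'' alternative you mention at the end. Your primary argument instead uses the variational characterization $\E[\ltwo{Y - \E[Y]}^2] = \inf_{v} \E[\ltwo{Y - v}^2]$ with the candidate $v = \Pi_C(\E[X])$, and then applies nonexpansiveness to the pair $(X, \E[X])$. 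Both proofs rest on the same single nontrivial ingredient (nonexpansiveness of Euclidean projection onto a closed convex set, which the paper cites rather than derives), and both are equally short; yours avoids the coupling and the factor-of-two bookkeeping, at the cost of needing $\E[X]$ to exist so that the candidate point is well defined --- a point you correctly flag, and which is harmless in the setting where the lemma is used (where $\E[\ltwo{X}^\moment] < \infty$ for $\moment \ge 2$). Either write-up would be acceptable.
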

\begin{proof}
  Let $X'$ be an i.i.d.~ copy of $X$. Then, since $\E[X]=\E[X']$, we have
  \begin{align*}
    \E[\ltwo{X-X'}^2]
    &=\E[\ltwo{X-\E[X] + \E[X'] - X'}^2]\\
    &=\E[\ltwo{X-\E[X]}^2] + \E[\ltwo{X'-\E[X']}^2]
    - 2\E[\< X-\E[X],X'-\E[X'] \>]\\ 
    &=\E[\ltwo{X-\E[X]}^2] + \E[\ltwo{X'-\E[X']}^2]\\
    &=2\E[\ltwo{X-\E[X]}^2],
  \end{align*}
  where each step follows from the fact that $X$ and $X'$ are
  i.i.d. Similarly, we also have
  \begin{equation*}
    \E[\ltwo{\Pi_C(X)-\Pi_C(X')}^2] =
    2\E[\ltwo{\Pi_C(X)-\E[\Pi_C(X)]}^2].
  \end{equation*}
  The projection $\Pi_C$ is non-expansive~\cite[Chapter
    III.3]{HiriartUrrutyLe96}, so $\E[\ltwo{\Pi_C(X)-\Pi_C(X')}^2]\leq
  \E[\ltwo{X-X'}^2]$, proving the lemma.
\end{proof}

With Lemmas~\ref{lemma:proj-mean} and~\ref{lemma:proj-variance} in place, we
can now give a convergence guarantee for the
estimator~\eqref{eqn:truncated-mean-estimator} of the parameter $\theta=\E[X]$.
 Indeed, by the two lemmas, we
see that via a bias-variance decomposition
\begin{align}
  \E[\ltwos{\what{\theta} - \theta}^2]
  &= \E\Big[\ltwoBig{\left(\E[\truncate{X}{T}]- \E[X]\right)+\frac{1}{n}\sum_{i=1}^n\left(\truncate{X_i}{T}-\E[\truncate{X}{T}]\right)+W}^2 \Big] \nonumber \\
  & = \ltwo{\E[\truncate{X}{T}] - \E[X]}^2
  + \frac{1}{n}\E[\ltwo{\truncate{X}{T} - \E[\truncate{X}{T}]}^2]
  + \E[\ltwo{W}^2] \nonumber \\
  & \le
  \frac{\E[\ltwo{X}^\moment]^2}{(\moment - 1)^2 T^{2\moment-2}}
  + \frac{ r^2}{n}
  + \E[\ltwo{W}^2],
  \label{eqn:bias-variance}
\end{align}
where we have used Lemmas~\ref{lemma:proj-mean}
and~\ref{lemma:proj-variance}
and the fact that $\var(X) \le \E[\ltwo{X}^2] \le
r^2$ by assumption.

Now, for each of the privacy types, we evaluate the risk of the resulting
estimator when we perturb the mean of the truncated variables by $W$.  We
begin with $\diffp$-KL privacy (equation~\eqref{eqn:kl-private}). In this
case, we take $W \sim \normal\left(0, \frac{T^2}{n^2 \diffpkl} I_{d \times
  d}\right)$, and using the decomposition~\eqref{eqn:bias-variance}, the
rate of convergence is bounded by
\begin{equation*}
  \E[\ltwos{\what{\theta} - \E[X]}^2]
  \le \frac{r^2}{n} + \frac{r^{2\moment}}{(\moment - 1)^2 T^{2\moment - 2}}
  + \frac{T^2 d}{n^2 \diffpkl}.
\end{equation*}
Setting $T = (n^2 \diffpkl / d)^{1 / (2\moment)}$ to approximately minimize
the preceding expression, we obtain that
\begin{equation*}
  \E[\ltwos{\what{\theta} - \E[X]}^2]
  \lesssim \frac{r^2}{n} + r^2 \left(\frac{d}{n^2 \diffpkl}
  \right)^{\frac{k - 1}{k}}.
\end{equation*}

To obtain the results for $(\diffp, \delta)$-approximate differential privacy
and $\diffp$-differential privacy, we sample $W$ from a $\normal(0, \frac{T^2
  \log \frac{1}{\delta}}{n^2 \diffp^2} I_{d \times d})$ distribution,
which yields $(\diffp,\delta)$-approximate differential privacy
as noted previously, and that
\begin{equation*}
  \E[\ltwos{\what{\theta} - \E[X]}^2]
  \le \frac{r^2}{n} + \frac{r^{2 \moment}}{(\moment - 1)^2 T^{2 \moment - 2}}
  + \frac{T^2 d \log \frac{1}{\delta}}{n^2 \diffp^2}.
\end{equation*}
Choosing $T = (n^2 \diffp^2 / (d \log \delta^{-1}))^{1 / (2 \moment)}$
gives the second result of the proposition.

For the final result, we note that for a vector $W \in \R^d$ with independent
coordinates with densities $p(w) \propto \exp(-\kappa|w|)$, where
$\kappa = \diffp n / T \sqrt{d}$,
satisfies
\begin{equation*}
  \E[\ltwo{W}^2]
  = 2 \sum_{j=1}^d \frac{T^2 d}{\diffp^2 n^2}
  = 2 \frac{d^2 T^2}{n^2 \diffp^2},
\end{equation*}
so that as before, choosing $T = (n^2 \diffp^2 / d^2)^{1 / (2\moment)}$ gives
 the final result.
\qed

\bibliographystyle{abbrvnat}
\bibliography{bib}

\end{document}